\newtheorem{theorem}{Theorem}
\theoremstyle{definition}
\newtheorem{lemma}[theorem]{Lemma}
\theoremstyle{remark}
\newtheorem{remark}[theorem]{Remark}
\numberwithin{theorem}{section}
\numberwithin{equation}{section}
\numberwithin{table}{section}
\numberwithin{figure}{section}
\newcommand{\supp}{\operatorname{supp}}
\newcommand{\calT}{\mathcal{T}}
\newcommand{\calE}{\mathcal{E}}
\newcommand{\N}{\mathbb{N}}
\newcommand{\nr}{n_\mathrm{res}}
\newcommand{\id}{\mathsf{id}}
\newcommand{\Nb}{\mathtt{N}}
\newcommand{\Dt}{D_\tau}
\newcommand{\LLL}{\lvert\!\lvert\!\lvert}
\newcommand{\RRR}{\rvert\!\rvert\!\rvert}
\definecolor{darkgreen}{rgb}{0.09, 0.45, 0.27}
\definecolor{myOrange}{rgb}{0.85000,0.32500,0.09800}
\definecolor{myBlue}{RGB}{30,144,255} 
\definecolor{myGreen}{RGB}{69,169,100} 
\definecolor{myRed}{RGB}{165,12,42} 
\definecolor{myOrange}{RGB}{225,92,22} 
\begin{document}
%
%
\title[Localized implicit time stepping for the wave equation]
      {Localized implicit time stepping for the wave equation}
\author[D.~Gallistl \and R.~Maier]{Dietmar Gallistl \and Roland Maier}
\address[D.~Gallistl]{Institute of Mathematics, 
          Friedrich-Schiller-Universit\"at Jena, 
          Ernst-Abbe-Platz 2, 07743 Jena, Germany}
\email{dietmar.gallistl@uni-jena.de}
\address[R.~Maier]{Institute for Applied and Numerical Mathematics,
Karlsruhe Institute of Technology,
Englerstr.~2,
76131 Karlsruhe, Germany}
\email{roland.maier@kit.edu}
\thanks{D.~Gallistl is supported by the European Research Council
        (ERC Starting Grant \emph{DAFNE}, agreement ID 891734).}
\date{\today}

\begin{abstract}
This work proposes a discretization of the acoustic wave equation 
with possibly oscillatory coefficients based on
a superposition of discrete solutions to spatially localized 
subproblems computed with an implicit time discretization. 
Based on exponentially decaying entries of the global system 
matrices and an appropriate partition of unity, it is proved
that the superposition of localized solutions is appropriately
close to the solution of the (global) implicit scheme.
It is thereby justified that the localized (and especially parallel) 
computation on multiple overlapping subdomains is reasonable.
Moreover, a re-start is introduced after a certain amount of time steps
to maintain a moderate overlap of the subdomains. 
Overall, the approach may be understood as a domain decomposition 
strategy in space on successive short time intervals that completely 
avoids inner iterations. 
Numerical examples are presented.
\end{abstract}

\keywords{Wave equation, localized computations, domain decomposition}
\subjclass[2020]{%
65M12, 
65N30, 
35L20
}

\maketitle
\section{Introduction}\label{s:intro}

We consider the acoustic wave equation
\begin{subequations}\label{eq:exactSol}
	\begin{align}
		\partial_t^2 u - \nabla\cdot(A\nabla u) 
		= f \quad\text{in }\Omega\times(0,t_\mathrm{fin})
	\end{align}
	in an open, bounded, connected Lipschitz polytope
	$\Omega\subset\mathbb R^d$ in $d$ dimensions until some final time 
	$t_\mathrm{fin} > 0$,
	subject to the boundary condition
	\begin{align}
		u = 0 \quad\text{on }\partial\Omega\times(0,t_\mathrm{fin})
	\end{align}
	and the initial conditions
	\begin{align}
		u|_{t=0 } = u_0 
		\quad\text{and}\quad
		\partial_tu|_{t=0} = v_0 \qquad \text{in }\Omega.
	\end{align}
\end{subequations}
Here $u$ is the (unknown) wave field,
$A \in L^{\infty}(\Omega)$ is a positive and bounded 
coefficient that satisfies $\alpha \leq A(x) \leq \beta$
almost everywhere for 
given positive constants $\alpha$ and~$\beta$;
$u_0$ and $v_0$ are the initial data
and $f$ is a given source term.
Details can be found in Section~\ref{s:prelim}. 
Note that, more generally, a matrix-valued coefficient could be considered as well, but we stick to a scalar-valued coefficient for the ease of presentation.
Explicit time integration schemes 
-- such as the \emph{leapfrog method} as analyzed, e.g., in~\cite{Jol03,Chr09} --
for the numerical solution
of \eqref{eq:exactSol} are easy to implement and computationally
cheap in every single time step, in particular if mass lumping is used.
However, the time step size is typically limited by the CFL condition
that bounds the time step size by the spatial mesh parameter.
The advantage of implicit methods -- as, e.g., analyzed in~\cite{Dup73,Bak76} -- is that such a restriction
can be avoided, at the expense of solving a more involved linear system in each
time step.
While the propagation speed of the solution to~\eqref{eq:exactSol}
is finite, the support of the discrete solution defined by
an implicit scheme will usually equal $\overline\Omega$ after the first
time step. 
This suggests that, although the discrete information outside the
cone of propagation may be essential for the approximation to
fulfill some discrete conservation of energy,
it may be of minor importance in terms of approximation of 
the function $u$.
Given locally supported initial data and right-hand side,
the discrete functions computed from a global implicit method
or the same method restricted to a subdomain that includes
the physical cone of propagation, should therefore have 
comparable approximation properties.
The approach proposed in this work is based on this reasoning.
Although the matrix inversion related to each time step
of an implicit scheme transports information globally over
$\Omega$, relevant information decays fast
and can be captured by solving a system over a smaller sub-domain,
which implies less computational cost.
Global initial data and sources can be localized through a partition
of unity and the discrete solution can be defined by superposition
of the solutions to local subproblems that can be solved in
parallel.

In this paper, we work out this approach for the Crank--Nicolson
discretization of~\eqref{eq:exactSol} with first-order finite 
elements in space. 
We first present the classical (global) scheme and some preliminaries in Section~\ref{s:prelim}. 
We then quantify the decay of information and thus
the localizability of the problem in the energy norm and provide
an error estimate between the Crank--Nicolson solution and a
localized version (Section~\ref{s:decloc}). 
We then define the superposition scheme (referred to as \emph{local superposition method})
and provide an error estimate in Section~\ref{s:LSM}. 
Finally, numerical experiments are presented (Section~\ref{s:numerics}).

We emphasize that the general aim of our strategy is to avoid global computations at the cost of some limited overlap. 
This is particularly important if very fine discretization parameters are considered (e.g., in the context of multiscale problems with highly oscillatory coefficients), where global computations quickly become unfeasible.
Other approaches that address this issue are \emph{multiscale methods}, as considered, e.g., in~\cite{AbdG11,AbdH17,PetS17,OwhZ08,MaiP19} for the wave equation. 
These methods operate on a coarse (spatial) scale and only locally incorporate fine information into a suitable low-dimensional trial space, which needs to be built before the actual simulation. 
In~\cite{GeeM23}, such ideas are combined with a coefficient-adapted lumping strategy to achieve a fully explicit multiscale method. 
Note that multiscale methods typically use fine-scale information to
provide approximation properties with respect to the coarse scale,
whereas we use the coarse scale to make the computation of the fine-scale
solution more efficient.

Conceptually, our approach has some connections to classical \emph{domain decomposition} strategies; see, e.g., \cite{GanHN03,GanH05,GanKM21} for works in the context of the wave equation. 
However, compared to such methods we do not require multiple iterations and completely avoid the necessity to properly define transmission conditions between the subdomains at the cost of larger overlaps.  
Domain splitting ideas are also used in connection with locally implicit or local time-stepping schemes, see~\cite{DiaG09,GroMS21,CarH22,ChaI21}. From a practical point of view, our approach easily allows for a combination of different schemes in various subdomains as well, without the need for an appropriate coupling.
Finally, we would like to mention that the general idea of exploiting the fact that waves stay within the physical cone of propagation is also used in connection with so-called \emph{tent-pitching methods}, see~\cite{Ric94,FalR99,EriGSU05}.

\subsection*{Notation}
Throughout this work, $C > 0 $ denotes a generic constant that is 
independent of the scales $H,\varepsilon,h$, and $\vartheta$ but might 
depend on the dimension $d$ and the parameters $\alpha$ and $\beta$.
The value of $C$ might change from line to line in the estimates. 
Further, we write $ \theta \lesssim \eta$ if $\theta \leq C \eta$ 
and $\theta \eqsim \eta$ if 
$\theta \lesssim \eta\lesssim \theta$.
For the ease of notation, we assume that the diameter of the 
domain satisfies $\operatorname{diam}(\Omega)\eqsim 1$.
Finally, for $\omega \subseteq \Omega$ we use the notation $(\cdot,\cdot)_\omega := (\cdot,\cdot)_{L^2(\omega)}$, $a_\omega(\cdot,\cdot) := (A\nabla \cdot,\nabla \cdot)_\omega$, $\|\cdot\|_\omega := \|\cdot\|_{L^2(\omega)}$, as well as $\|\cdot\|_{a,\omega}^2:= a_\omega(\cdot,\cdot)$. Further, we abbreviate $(\cdot,\cdot) := (\cdot,\cdot)_\Omega$, $a(\cdot,\cdot) := a_\Omega(\cdot,\cdot)$, $\|\cdot\| := \|\cdot\|_\Omega$, and $\|\cdot\|_{a} := \|v\|_{a,\Omega}$.

\section{Preliminaries}
\label{s:prelim}
\subsection{Model problem}

Assuming that 
$f\in L^\infty(0,t_\mathrm{fin};L^2(\Omega))$, $u_0 \in H^1_0(\Omega)$, and $v_0 \in L^2(\Omega)$, 
the weak formulation of~\eqref{eq:exactSol} reads: find $u\in L^\infty(0,t_\mathrm{fin};H^1_0(\Omega))$, with $\partial_t u\in L^\infty(0,t_\mathrm{fin};L^2(\Omega))$ and $\partial_t^2u\in L^\infty(0,t_\mathrm{fin};H^{-1}(\Omega))$, such that $u|_{t=0}= u_0$, $\partial_tu|_{t=0} = v_0$ and 
\begin{align*}
	\langle \partial_t^2u, w \rangle + a(u,w) 
	= (f,w)
	\quad\text{for all }w\in H_0^1(\Omega)\text{ and a.e. }t\in(0,t_\mathrm{fin})
\end{align*}
with the duality pairing $\langle\cdot,\cdot\rangle$ of $H^{-1}(\Omega)$ with  $H^1_0(\Omega)$. 
Well-posedness of this problem follows directly by \cite[Ch.~3, Thm.~8.1 \& Rem.~8.2]{LioM72}. If also $\partial_tf\in L^\infty(0,t_\mathrm{fin};L^2(\Omega))$, $v_0 \in H^1_0(\Omega)$, and $f|_{t=0} + \nabla\cdot (A\nabla u_0) \in L^2(\Omega)$, we have $\partial_t^2u\in L^{\infty}(0,t_\mathrm{fin};L^2(\Omega))$ and can replace the duality pairing 
by the $L^2$-inner product.

\subsection{Classical discretization} 

In order to solve the model problem computationally, we discretize in space using the first-order finite element space $V_h$, which is a subspace of $H^1_0(\Omega)$ consisting of piecewise polynomials with coordinate degree at most one on a regular and quasi-uniform mesh $\calT_h$ with characteristic mesh size $h$. Further, for $\omega \subseteq \Omega$ we set $V_h(\omega) := \{v_h \in V_h \;\colon\;\supp(v_h)\subseteq \omega\}$. 
For the ease of notation, we assume $Q_1$ finite elements over parallelepipedal
meshes, but the arguments of this paper are valid for $P_1$ finite elements over
simplicial triangulations as well.
In time, we choose a \emph{Crank--Nicolson scheme} with time step size $\tau$.
It involves the second-order centered difference quotient
\begin{equation*}
	\hat\partial^2_\tau z_h^n:=\tau^{-2}(z_h^{n+1} - 2 z_h^n + z_h^{n-1})
\end{equation*}
in time and the averages
\begin{equation*}
	\widehat{z_h^n}:=\frac14 (z_h^{n+1} + 2 z_h^n + z_h^{n-1})
\end{equation*}
of given functions $(z_h^n)_n$.
The fully discrete scheme seeks $(u_h^n)_n$ with $u_h^n \in V_h$ such that
\begin{equation}\label{eq:CN}
	(\hat\partial^2_\tau u_h^n,v_h) 
	+
	a(\widehat{u_h^n},v_h) = (\widehat{f_h^{n}},v_h)
	\quad\text{for all }v_h \in V_h
\end{equation}
given appropriate initial conditions $u_h^1,\,u_h^{0} \in V_h$. 
Here, $f_h^k$ is an approximation of $f(k \tau)$ in $V_h$, e.g., its $L^2$-projection.
We emphasize that the scheme~\eqref{eq:CN} is obtained when the classical Crank--Nicolson scheme, which traces back to~\cite{CraN47}, is employed for the first-order formulation of the wave equation and the second variable is eliminated afterwards. Therefore, we use the term \emph{Crank--Nicolson scheme} here as well.
Using the mass matrix $M_h$ and stiffness matrix $S_h$, the scheme can be written as a system of linear equations as follows,
\begin{equation*}
	(M_h + \tfrac{\tau^2}{4} S_h)u_h^{n+1} = \tau^2 M_h\widehat{f_h^n}
	+ M_h(2u_h^n - u_h^{n-1}) - \tfrac{\tau^2}{4} S_h(2 u_h^n + u_h^{n-1}),
\end{equation*}
where we tacitly identify $u_h$ with its coefficient vector with respect
to a spatial basis.
We will use the conventions
\begin{equation} \label{eq:halfstep}
	\Dt z_h^{n+1/2} := \tfrac{z_h^{n+1} - z_h^n}{\tau},\qquad z_h^{n+1/2} := \tfrac{z_h^{n+1} + z_h^n}{2}
	\quad\text{for sequences } (z_h^n)_n.
\end{equation} 
We define the discrete energy norm
$$
\| v \|_{\mathcal E,\omega}= \left (\| \Dt v \|_\omega^2 + \|v\|_{a,\omega}^2\right)^{1/2}
$$
where we drop the domain dependence in the notation if 
$\omega=\Omega$,
and the discrete energy
\begin{equation*}
	\calE_{h,\tau}^{n + 1/2} :=
	\frac12 \| u_h^{n+1/2} \|_{\mathcal E}^2 .
\end{equation*}
We have the following result concerning the energy conservation of the scheme. 
\begin{theorem}[Energy conservation of the Crank--Nicolson method]\label{thm:energy}
	If $f \equiv 0$, we have energy conservation in the sense that
	\begin{equation*}
		\calE_{h,\tau}^{n+1/2} = \calE_{h,\tau}^{1/2}.
	\end{equation*}
	If $f \neq 0$, it holds that
	\begin{equation}\label{eq:stab}
		\sqrt{\calE_{h,\tau}^{n+1/2}} \leq \sqrt{\calE_{h,\tau}^{1/2}} + \sum_{j=1}^n \frac{\tau}{\sqrt{2}} \big\|\widehat{f_h^{j}}\big\|_{L^2(\Omega)}.
	\end{equation}
\end{theorem}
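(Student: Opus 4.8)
The plan is to establish the energy identity by testing the Crank--Nicolson equation~\eqref{eq:CN} with the natural discrete velocity $v_h = \Dt u_h^{n+1/2} - \Dt u_h^{n-1/2} = \tau^{-1}(u_h^{n+1}-u_h^{n-1})$, which is (up to the factor $\tau$) the second-order difference quotient's conjugate variable. The key algebraic observations are the two telescoping identities
\begin{equation*}
  (\hat\partial^2_\tau u_h^n, u_h^{n+1}-u_h^{n-1}) = \|\Dt u_h^{n+1/2}\|^2 - \|\Dt u_h^{n-1/2}\|^2
\end{equation*}
and, since $\widehat{u_h^n} = \tfrac12(u_h^{n+1/2} + u_h^{n-1/2})$ and $u_h^{n+1}-u_h^{n-1} = (u_h^{n+1}-u_h^n)+(u_h^n-u_h^{n-1})$ telescopes against the average,
\begin{equation*}
  a(\widehat{u_h^n}, u_h^{n+1}-u_h^{n-1}) = \|u_h^{n+1/2}\|_a^2 - \|u_h^{n-1/2}\|_a^2 .
\end{equation*}
Multiplying~\eqref{eq:CN} by $\tau$ and summing these, the left-hand side becomes exactly $2(\calE_{h,\tau}^{n+1/2} - \calE_{h,\tau}^{n-1/2})$.

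Next I would treat the right-hand side. When $f\equiv 0$ it vanishes, so $\calE_{h,\tau}^{n+1/2} = \calE_{h,\tau}^{n-1/2}$ for every $n$, and iterating down to the base index gives $\calE_{h,\tau}^{n+1/2} = \calE_{h,\tau}^{1/2}$, which is the first claim. For the inhomogeneous case, the right-hand term is $\tau(\widehat{f_h^n}, \Dt u_h^{n+1/2} - \Dt u_h^{n-1/2})$; I would bound it by Cauchy--Schwarz as $\tau\|\widehat{f_h^n}\|\,(\|\Dt u_h^{n+1/2}\| + \|\Dt u_h^{n-1/2}\|) \le \sqrt{2}\,\tau\|\widehat{f_h^n}\|\,(\sqrt{\calE_{h,\tau}^{n+1/2}} + \sqrt{\calE_{h,\tau}^{n-1/2}})$, using $\|\Dt v\| \le \sqrt{2}\sqrt{\calE_{h,\tau}}$ with the relevant half-step index. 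Dividing the energy difference $2(\calE_{h,\tau}^{n+1/2} - \calE_{h,\tau}^{n-1/2}) = (\sqrt{\calE_{h,\tau}^{n+1/2}} - \sqrt{\calE_{h,\tau}^{n-1/2}})(\sqrt{\calE_{h,\tau}^{n+1/2}} + \sqrt{\calE_{h,\tau}^{n-1/2}})$ by the common factor $\sqrt{\calE_{h,\tau}^{n+1/2}} + \sqrt{\calE_{h,\tau}^{n-1/2}}$ yields $\sqrt{\calE_{h,\tau}^{n+1/2}} - \sqrt{\calE_{h,\tau}^{n-1/2}} \le \tfrac{\tau}{\sqrt{2}}\|\widehat{f_h^n}\|$, and a telescoping sum over the step index from the base value to $n$ gives~\eqref{eq:stab}.

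The main obstacle is purely bookkeeping: verifying the two telescoping identities above requires carefully expanding $\hat\partial^2_\tau u_h^n$ and $\widehat{u_h^n}$ in terms of the half-step quantities~\eqref{eq:halfstep} and confirming that the cross terms cancel — in particular the $a$-bilinearity identity uses symmetry of $a(\cdot,\cdot)$ together with $u_h^{n+1/2}-u_h^{n-1/2}=\tfrac12(u_h^{n+1}-u_h^{n-1})$ and $u_h^{n+1/2}+u_h^{n-1/2}=2\widehat{u_h^n}$, so that $a(\widehat{u_h^n},u_h^{n+1}-u_h^{n-1}) = a\big(\tfrac12(u_h^{n+1/2}+u_h^{n-1/2}),\,u_h^{n+1/2}-u_h^{n-1/2}\big)$. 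A minor subtlety is the division step in the inhomogeneous estimate when $\sqrt{\calE_{h,\tau}^{n+1/2}} + \sqrt{\calE_{h,\tau}^{n-1/2}} = 0$, which forces both energies to vanish and makes the desired inequality trivial, so this degenerate case is handled separately. No regularity beyond $u_h^n\in V_h$ is needed since everything is finite-dimensional and the identities are exact.
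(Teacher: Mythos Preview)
Your proof is correct and follows exactly the approach sketched in the paper: test \eqref{eq:CN} with $v_h = u_h^{n+1} - u_h^{n-1}$, obtain the telescoping energy identity, and handle the source term by Cauchy--Schwarz plus the difference-of-squares factorization. Two harmless typos to fix: the test function equals $\Dt u_h^{n+1/2} + \Dt u_h^{n-1/2}$ (not the difference), and your factorization of $2(\calE_{h,\tau}^{n+1/2} - \calE_{h,\tau}^{n-1/2})$ is missing a factor of $2$ on the right, though your final one-step bound $\sqrt{\calE_{h,\tau}^{n+1/2}} - \sqrt{\calE_{h,\tau}^{n-1/2}} \le \tfrac{\tau}{\sqrt{2}}\|\widehat{f_h^n}\|$ comes out correctly once these are repaired.
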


\begin{proof}
	The result follows directly with the choice 
	\begin{equation*} 
		v_h = u_h^{n+1} - u_h^{n-1} = (u_h^{n+1} - u_h^{n}) + (u_h^{n} - u_h^{n-1})
	\end{equation*} 
	as specific test function in~\eqref{eq:CN} after some minor algebraic modifications.
	A detailed proof is for instance provided in~\cite[Thm.~3.3.4]{Lon17}.
\end{proof}

\section{Decay and localization}\label{s:decloc}

In this section, we investigate the behavior of a discrete solution corresponding to local data. In particular, we investigate decay properties and motivate a localization of the discrete solution.

\subsection{Decaying solutions} In every time step, the system~\eqref{eq:CN} seeks the solution $u_h^{n+1} \in V_h$ to an equation characterized on the left-hand side by the bilinear form
\begin{equation*}
	\mathcal K(z_h,v_h):= (z_h,v_h)_{L^2(\Omega)} + \tfrac{\tau^2}{4}a(z_h,v_h).
\end{equation*}
For $h \approx \tau$, the inverse of the system matrix 
(with respect to the Lagrange basis) of this bilinear form
has exponentially decaying entries away from the diagonal (cf.~Figure~\ref{fig:sysM}). Therefore, \emph{relevant} information is only propagated by a certain amount from one time step to the subsequent one, similarly to the physical propagation of a (local) wave.
\begin{figure}
	\centering
	\includegraphics[width=.48\textwidth]{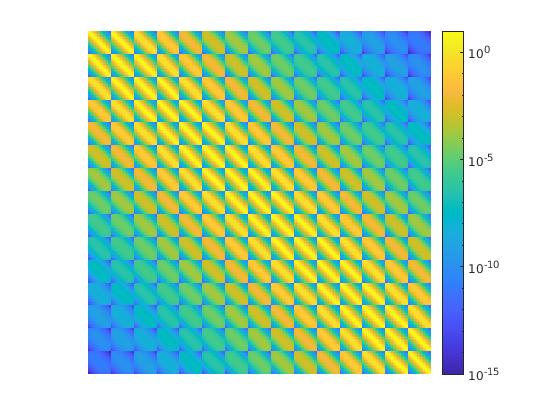}  
	\hfill
	\includegraphics[width=.48\textwidth]{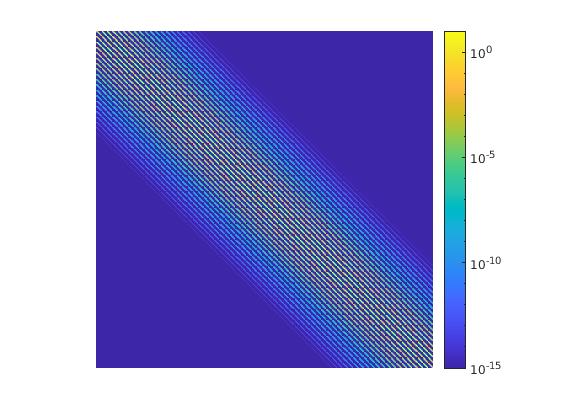}
	\caption%
	{Values of the inverse system matrix of the Crank--Nicolson scheme in two dimensions on a uniform and lexicographically ordered mesh with logarithmic color coding; mesh size (and time step) $h = \tau = 2^{-4}$ (left) and $h= \tau = 2^{-6}$ (right).} 
	\label{fig:sysM}
\end{figure}
To make this observation more rigorous, we show an appropriate decay estimate. Therefore, we require a special norm that is associated to the left-hand side of the  Crank--Nicolson scheme and reads, for any $\omega \subseteq \Omega$, 
\begin{equation*}
	\LLL v_h \RRR^2_{\omega} := 
	\mathcal K_\omega(v_h,v_h)
	=
	\|v_h\|_{L^2(\omega)}^2 + \tfrac{\tau^2}{4} \|v_h\|_{a,\omega}^2, \quad v_h \in V_h,
\end{equation*}
and abbreviate $\LLL v_h\RRR := \LLL v_h\RRR_{\Omega}$.
Further, we require the concept of element patches around a subset $\omega \subseteq \Omega$.  We define
\begin{equation*}
	\Nb_\ell(\omega) := \Nb(\Nb_{\ell-1}(\omega)),\,\ell \geq 2,
	\qquad 
	\Nb_1(\omega) = \Nb(\omega) := \operatorname{int}\bigcup \bigl\{\overline{K} \in \calT_h\,\vert\, \overline{\omega} \,\cap\, \overline{K}\neq \emptyset\bigl\}.
\end{equation*}
Further, we set $\Nb_0(\omega) := \operatorname{int}\omega$. 
We can now state the following result.
\begin{lemma}[Decaying discrete solution]\label{lem:decay}
	Let $\omega \subseteq \Omega$ be a union of elements such that $\supp f \subseteq \omega, \supp u_h^1 \subseteq \omega, \supp u_h^0 \subseteq \omega$. 
	For any $n \in \N$ and $\ell \in \N$, we have
	\begin{equation}\label{eq:dec}
		\LLL u_h^{n+1/2}\RRR_{\Omega \setminus \Nb_{\ell}(\omega)} 
		\leq (\ell+1)^{n/2}\gamma^{\ell} \max_{k \leq n}\LLL u_h^{k+1/2}\RRR
	\end{equation}
	with 
	$$
	0<\gamma = \sqrt{(C_{\tau,h} + \tfrac12)/(1+C_{\tau,h})} < 1
	$$
	for a constant 
	$C_{\tau,h} \eqsim \beta\alpha^{-1}(\tau/h + h/\tau)$ 
	that also depends on $d$. 
\end{lemma}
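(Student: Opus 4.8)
The plan is to establish the spatial decay by a Caccioppoli-type argument using the $\mathcal K$-harmonicity of $u_h^{n+1}$ away from the support of the data, combined with a cut-off function that is constant on most of the domain and transitions within a single ring of elements. The key point is that the left-hand side operator $\mathcal K$ is coercive with respect to the triple-bar norm, so the error committed by testing with a cut-off version of the solution can be absorbed, leaving a geometric decay factor per layer.

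First I would analyze a single time step. Fix a layer index $\ell$ and let $\eta$ be a piecewise-linear (or $Q_1$) cut-off with $\eta \equiv 1$ on $\Omega\setminus\Nb_{\ell}(\omega)$, $\eta \equiv 0$ on $\Nb_{\ell-1}(\omega)$, and $|\nabla\eta|\lesssim h^{-1}$ supported in the single ring $R := \Nb_{\ell}(\omega)\setminus\Nb_{\ell-1}(\omega)$. On the set where the data vanish, $u_h^{n+1}$ satisfies $\mathcal K(u_h^{n+1},v_h) = (2u_h^n - u_h^{n-1},v_h) - \tfrac{\tau^2}{4}a(2u_h^n+u_h^{n-1},v_h)$ for test functions supported away from $\omega$; more precisely, subtracting the equations and using $v_h = I_h(\eta^2 u_h^{n+1})$ (the nodal interpolant, to stay in $V_h$) one arrives at an identity in which $\LLL u_h^{n+1}\RRR_{\Omega\setminus\Nb_\ell(\omega)}^2$ is bounded by cross terms localized to the ring $R$. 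These cross terms are of two types: commutator terms $\tfrac{\tau^2}{4}(A\nabla\eta)\cdot(\ldots)$ coming from the product rule, which are controlled using $\tau\lesssim h$, $|\nabla\eta|\lesssim h^{-1}$, and $\alpha\leq A\leq\beta$ — this is where the constant $C_{\tau,h}\eqsim\beta\alpha^{-1}(\tau/h+h/\tau)$ enters — and right-hand side terms involving $u_h^n,u_h^{n-1}$ on the ring. Using Young's inequality to absorb the $\LLL u_h^{n+1}\RRR_R$ contributions and rearranging yields, per step,
\begin{equation*}
	\LLL u_h^{n+1}\RRR_{\Omega\setminus\Nb_\ell(\omega)}^2 \;\leq\; \gamma^2\Big(\LLL u_h^{n+1}\RRR_{\Omega\setminus\Nb_{\ell-1}(\omega)}^2 + \text{(data on the ring from earlier steps)}\Big),
\end{equation*}
with $\gamma^2 = (C_{\tau,h}+\tfrac12)/(1+C_{\tau,h})$, which is strictly less than one since $C_{\tau,h}>0$.

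Next I would iterate in the spatial index $\ell$: applying the one-step estimate $\ell$ times peels off $\ell$ layers and produces the factor $\gamma^{\ell}$, at the price of accumulating the ring contributions from the intermediate layers. Then I would iterate in time: the right-hand side terms involve $u_h^n$ and $u_h^{n-1}$ on rings, so unrolling the recursion from step $n$ down to the initial data (whose support lies in $\omega$, hence contributes nothing outside $\Nb_0(\omega)$) introduces at most $n$ telescoped contributions. Bookkeeping the combinatorial count of how many ring-terms at each layer survive after $n$ temporal steps — each step can shift the relevant layer index by at most one — gives the polynomial prefactor, which I would estimate crudely by $(\ell+1)^{n/2}$ (this is surely not sharp but suffices). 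Passing from the $L^2$-in-time-free statement to the $\max_{k\le n}$ on the right-hand side is immediate since every term that appears is $\LLL u_h^{k+1/2}\RRR$ for some $k\le n$. Finally, the half-step quantity $u_h^{n+1/2} = \tfrac12(u_h^{n+1}+u_h^n)$ is handled by the triangle inequality for $\LLL\cdot\RRR_{\Omega\setminus\Nb_\ell(\omega)}$ together with the one-layer shift $\Nb_\ell\supseteq\Nb_{\ell-1}$, at worst changing $\ell$ by one inside the geometric factor.

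The main obstacle I anticipate is the careful bookkeeping that simultaneously tracks the spatial layer peeling and the temporal recursion without the polynomial prefactor blowing up — in particular making sure that the cross terms on the rings at step $k$ really can be re-expressed as $\LLL u_h^{k+1/2}\RRR$ (or close relatives controlled by it via the energy estimate of Theorem~\ref{thm:energy}) rather than as something involving spatial derivatives that cannot be absorbed. A secondary technical nuisance is the nodal interpolation $I_h(\eta^2 u_h^{n+1})$: one must verify that replacing $\eta^2 u_h^{n+1}$ by its interpolant changes the $L^2$- and energy-localized norms only by constants and does not spoil the support property (which it does not, since $\supp I_h(\eta^2 u_h^{n+1})\subseteq\supp(\eta^2 u_h^{n+1})\cup(\text{one layer})$), and that the extra layer thereby introduced is harmless. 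Everything else is standard Caccioppoli/Young manipulation.
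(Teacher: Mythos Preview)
Your proposal is correct and follows essentially the same route as the paper: a Caccioppoli-type cut-off argument producing a one-layer contraction with the specific constant $\gamma^2=(C_{\tau,h}+\tfrac12)/(1+C_{\tau,h})$, followed by iteration first in the spatial layer index and then an induction over $n$ to obtain the $(\ell+1)^{n/2}$ prefactor. Two cosmetic differences are worth noting: the paper uses the linear cut-off $\eta_\ell$ (not $\eta^2$) and, more importantly, works directly with the half-step quantities $u_h^{n+1/2}$ throughout, so that the right-hand side of the recursion already involves $\LLL u_h^{n-1/2}\RRR_{\Omega\setminus\Nb_{\ell-1}(\omega)}$ and $\LLL u_h^{n-3/2}\RRR_{\Omega\setminus\Nb_{\ell-1}(\omega)}$ --- this avoids the final full-step/half-step patching you describe and makes the induction bookkeeping (your ``main obstacle'') clean; also, those earlier-time terms live on $\Omega\setminus\Nb_{\ell-1}(\omega)$, not just on the ring, and your parenthetical ``using $\tau\lesssim h$'' is a slip (no such assumption is made; the symmetric constant you write next is the correct one).
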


\begin{proof}
	The proof adopts the main ideas of decay proofs as used in the context of the multiscale method known as \emph{localized orthogonal decomposition}, see, e.g., \cite{MalP14,MalP20}. 
	For $n=0$, a related result was proved to show the decay of the Green's function for linear Schr\"odinger operators in an energy norm that is similar to our norm $\LLL\cdot\RRR$, see~\cite{AltHP20}.
	
	For any $\ell$, we define the cutoff function 
	$\eta_\ell \in V_h$ 
	with values $0 \leq \eta_\ell \leq 1$ and
	the following properties,
	\begin{equation}\label{eq:eta}
		\eta_\ell =
		\begin{cases}
			0&\text{in }\Nb_{{\ell-1}}(\omega) \\
			1&\text{in }\Omega\setminus\Nb_{{\ell}}(\omega)
		\end{cases}
		\qquad\text{and}\qquad
		\|\nabla\eta_\ell\|_{L^\infty(\Omega)} \leq 2^{(d-1)/2}\, h^{-1}.
	\end{equation}
	Further, we require the nodal interpolation operator $I_h \colon C^0(K) \to V_h$ onto $Q_1$ finite elements. 
	Standard interpolation estimates and the inverse inequality
	prove for any $K \in \calT_h$ and all $q \in Q_2(K)$ 
	(the space of biquadratic polynomial functions over $K$) 
	that
	\begin{equation}\label{eq:nodalInt}
		\|(\id - I_h)q\|_{a,K} 
		\leq C\beta^{1/2}\,h\, \|\nabla^2 q\|_{K}
		\leq C\beta^{1/2}\alpha^{-1/2}\| q\|_{a,K}. 
	\end{equation}
	The triangle inequality thus implies the stability property
	\begin{equation}\label{eq:H1cutoff}
		\|I_hq\|_{a,K} \leq \|q\|_{a,K} + \| (\id - I_h)q\|_{a,K}
		\leq (1+ C\beta^{1/2}\alpha^{-1/2})\| q\|_{a,K}
	\end{equation}
	for any $q \in Q_2(K)$.
	Using $\eta_\ell$ and $I_h$, we estimate for any $z_h \in V_h$
	\begin{align} \label{eq:proofDec1}
		\begin{aligned}
			\LLL z_h\RRR_{\Omega \setminus \Nb_\ell(\omega)}^2
			& = \mathcal K_{\Omega \setminus \Nb_\ell(\omega)}(z_h,z_h)
			\\
			&
			\leq (z_h,z_h \eta_\ell) + \tfrac{\tau^2}{4}\big(A \nabla z_h,\eta_\ell\nabla z_h\big)
			\\
			&
			= (z_h,z_h \eta_\ell) + \tfrac{\tau^2}{4}a(z_h,\eta_\ell z_h) - \tfrac{\tau^2}{4} \big(A \nabla z_h,z_h\nabla \eta_\ell\big)
			\\
			&
			=
			\mathcal K (z_h,I_h(z_h \eta_\ell))
			- \tfrac{\tau^2}{4} \big(A \nabla z_h,z_h\nabla \eta_\ell \big)
			+ \mathcal K (z_h,(\id-I_h)(z_h \eta_\ell)).
		\end{aligned}
	\end{align}
	Using that the support of $\nabla\eta_\ell$ is the closure of 
	$R_\ell:=\Nb_{\ell}(\omega) \setminus \Nb_{\ell-1}(\omega)$,
	the bound from \eqref{eq:eta} on $\nabla\eta_\ell$,
	and a weighted Young's inequality with weight
	$\delta = 2^{(3-d)/2} \tau^{-1}h\beta^{-1/2}$ 
	we obtain
	\begin{equation}\label{eq:proofDecTerm2}
		\begin{aligned}
			\big|\tfrac{\tau^2}{4} \big(A \nabla z_h,z_h\nabla \eta_\ell\big)\big|
			&
			\leq \tfrac1{2\delta} \tfrac{\tau^2}{4}\| z_h\|^2_{a,R_\ell} + \tfrac{\beta\delta}2 2^{(d-1)}\, h^{-2} \tfrac{\tau^2}{4} \|z_h\|_{R_\ell}^2
			\\
			&
			\leq 2^{(d-3)/2} h^{-1}\tau \beta^{1/2} \LLL z_h\RRR_{R_\ell}^2.
		\end{aligned}
	\end{equation}
	For the last term in~\eqref{eq:proofDec1}, we employ the fact that 
	$\supp ((\id - I_h)(z_h \eta_\ell)) \subseteq R_\ell$, a classical $L^2$-interpolation result of $I_h$, as well as~\eqref{eq:nodalInt}. 
	This leads to
	\begin{align}
		\mathcal K(z_h,(\id-I_h)(z_h \eta_\ell))
		& =\mathcal K_{R_\ell}(z_h,(\id-I_h)(z_h \eta_\ell)) \notag
		\\
		&
		= (z_h,(\id-I_h)(z_h \eta_\ell))_{R_\ell} 
		+ \tfrac{\tau^2}{4} a_{R_\ell}(z_h,(\id - I_h)(\eta_\ell z_h)) \label{eq:proofDec2}
		\\
		&
		\leq \big(C\alpha^{-1/2} h\,\|z_h\|_{R_\ell} 
		+ C \beta^{1/2}\alpha^{-1/2}\tfrac{\tau^2}{4} \|z_h\|_{a,R_\ell}\big) \|z_h\eta_\ell\|_{a,R_\ell}. \notag
	\end{align}
	Further, we have from the product rule and the bound from \eqref{eq:eta}
	\begin{equation}\label{eq:stabIh}
		\| z_h \eta_\ell \|_{a,R_\ell} 
		\leq Ch^{-1}\beta^{1/2} \|z_h\|_{R_\ell} + \| z_h\|_{a,R_\ell}.
	\end{equation}
	Going back to~\eqref{eq:proofDec2}, we arrive at 
	\begin{equation}\label{eq:proofDec3} 
		\mathcal K(z_h,(\id-I_h)(z_h \eta_\ell))
		\leq 
		C(\beta/\alpha)^{1/2} \LLL z_h\RRR_{R_\ell}^2
		+ C\beta\alpha^{-1/2}
		\big( h + \tfrac{\tau^2}{4h} \big) \|z_h\|_{R_\ell} \|z_h\|_{a,R_\ell}.
	\end{equation}
	A weighted Young's inequality with $\delta=\tau/2$ gives
	\begin{align*}
		\big( h + \tfrac{\tau^2}{4h} \big) \|z_h\|_{R_\ell} \|z_h\|_{a,R_\ell}
		\leq
		\big( h + \tfrac{\tau^2}{4h} \big) 
		\big( \frac{1}{2\delta} \|z_h\|_{R_\ell}^2
		+
		\frac{\delta}{2}  \|z_h\|_{a,R_\ell}^2 \big)
		=
		\big( \tfrac{h}{\tau} + \tfrac{\tau}{4h}\big)
		\LLL z_h\RRR_{R_\ell} ^2
		.
	\end{align*}
	The combination with \eqref{eq:proofDec3} shows that
	there exists a constant $C>0$ such that
	\begin{equation}\label{eq:proofDecTerm3}
		\mathcal K(z_h,(\id-I_h)(z_h \eta_\ell))
		\leq C \beta\alpha^{-1/2}
		\big( \tfrac{h}{\tau} + \tfrac{\tau}{4h}\big) \LLL z_h\RRR_{R_\ell}^2.
	\end{equation}
	Altogether, inserting~\eqref{eq:proofDecTerm2} and~\eqref{eq:proofDecTerm3} into~\eqref{eq:proofDec1}, we obtain
	\begin{equation}\label{eq:proofDec4}
		\LLL z_h\RRR_{\Omega \setminus \Nb_\ell(\omega)}^2 
		\leq 
		C_1 \LLL z_h\RRR_{R_\ell}^2 
		+ \mathcal K(z_h,I_h(z_h \eta_\ell))
	\end{equation}
	with a constant $C_1 \eqsim \beta\alpha^{-1/2}(\tau/h+h/\tau)$.
	
	We now turn to specific choices of $z_h \in V_h$ and prove the decay property~\eqref{eq:dec} by induction over $n$. For $n = 0$ and $z_h = u_h^{1/2}$, we have
	\begin{equation*}
		\mathcal K (u_h^{1/2},I_h(u_h^{1/2} \eta_\ell))= 0
	\end{equation*}
	since, by assumption, the supports of $u^1_h$ and $u^0_h$ have a trivial
	intersection with the support of $\eta_\ell$ and so with the support of 
	$I_h(\eta_\ell u_h^{1/2})$.
	Employing the elementary set identity
	$$
	R_\ell = (\Omega\setminus \Nb_{\ell-1}(\omega))\setminus (\Omega\setminus \Nb_{\ell}(\omega)),
	$$
	we therefore obtain from~\eqref{eq:proofDec4}
	\begin{equation*}
		\LLL u_h^{1/2}\RRR_{\Omega \setminus \Nb_\ell(\omega)}^2 
		\leq C_{1} \LLL u_h^{1/2}\RRR_{\Omega \setminus \Nb_{\ell-1}(\omega)}^2 
		- 
		C_{1} \LLL u_h^{1/2}\RRR_{\Omega \setminus \Nb_\ell(\omega)}^2
	\end{equation*}
	and thus with $\delta := \sqrt{C_{1}/(1+C_{1})} < 1$
	\begin{equation*}
		\LLL u_h^{1/2}\RRR_{\Omega \setminus \Nb_\ell(\omega)}^2 
		\leq \delta^{2\ell} \LLL u_h^{1/2}\RRR^2. 
	\end{equation*}
	In particular, \eqref{eq:dec} holds for $n = 0$ and any $\delta \leq \gamma < 1$.
	
	Next, we consider the case $n=1$ with $z_h = u_h^{3/2}$.
	We use $u_h^{3/2}=\tfrac12u_h^{2}+\tfrac12u_h^{1}$
	and an argument similar to that for the case $n=0$ above
	together with $\supp(f_h) \subseteq \omega$
	to estimate the last term on the right-hand side of~\eqref{eq:proofDec4}
	(where we abbreviate $y:=I_h(u_h^{3/2} \eta_\ell)$)
	as follows
	\begin{align*}
		\mathcal K (u_h^{3/2},y)
		& =
		\tfrac12 \mathcal K(u_h^2,y)
		= \tfrac{\tau^2}{2} (\widehat{f_h^1},y)
		+ \tfrac12(2 u_h^1 - u_h^0,y) - \tfrac12\tfrac{\tau^2}{4}a(2u_h^1 + u_h^0, y)
		= 
		0.
	\end{align*}
	As above, we therefore get 
	$
	\LLL u_h^{3/2}\RRR_{\Omega \setminus \Nb_\ell(\omega)}^2 
	\leq \delta^{2\ell} \LLL u_h^{3/2}\RRR^2
	$
	and particularly~\eqref{eq:dec} for $n = 1$ with $\delta \leq \gamma < 1$.
	
	Let now $n \geq 2$ and assume that~\eqref{eq:dec} holds for any $k < n$. 
	We abbreviate $w:=I_h(u_h^{n+1/2} \eta_\ell)$ and observe from
	\eqref{eq:H1cutoff} that $\|w\|_a$
	is bounded from above by a constant times
	$\|u_h^{n+1/2}\|_a$.
	For the last term in~\eqref{eq:proofDec4}, we therefore estimate
	\begin{align*}
		&\mathcal K(u_h^{n+1/2},I_h(u_h^{n+1/2} \eta_\ell))
		=\mathcal K(u_h^{n+1/2},w)
		\\
		& 
		\quad= \tau^2 (\widehat{f_h^{n-1/2}},w) 
		+ (2 u_h^{n-1/2} - u_h^{n-3/2},w)
		- \tfrac{\tau^2}{4}a(2u_h^{n-1/2} + u_h^{n-3/2}, w)\\
		&
		\quad\leq \tfrac12 \LLL u_h^{n+1/2}\RRR^2_{\Omega\setminus \Nb_{\ell-1}(\omega)} 
		+ C_2\big(\tfrac12\LLL u_h^{n-1/2}\RRR^2_{\Omega\setminus \Nb_{\ell-1}(\omega)} + \tfrac12\LLL u_h^{n-3/2}\RRR^2_{\Omega\setminus \Nb_{\ell-1}(\omega)}\big)
	\end{align*}
	with an appropriate constant 
	$C_2$ (proportional to $\beta\alpha^{-1}$)
	using a weighted Young's inequality. 
	Going back to~\eqref{eq:proofDec4} (with the specific choice $z_h = u_h^{n+1/2}$), we therefore get 
	\begin{align*} 
		\LLL u_h^{n+1/2}\RRR_{\Omega \setminus \Nb_\ell(\omega)}^2
		&
		\hspace{-0.08cm}\leq\hspace{-0.05cm}  C_{\tau,h} \LLL u_h^{n+1/2}\RRR_{R_\ell}^2\hspace{-0.07cm} 
		+ \hspace{-0.07cm}\tfrac12 \LLL u_h^{n+1/2}\RRR_{\Omega\setminus \Nb_{\ell-1}(\omega)}^2 
		\hspace{-0.08cm}+\hspace{-0.07cm}\tfrac{C_{\tau,h}}{2}\hspace{-.2cm} \sum_{\nu=1,3}\hspace{-0.05cm}\LLL u_h^{n-\nu/2}\RRR^2_{\Omega\setminus \Nb_{\ell-1}(\omega)},
	\end{align*}
	where $C_{\tau,h} := \max\{C_1,C_2\} \eqsim \beta\alpha^{-1}(\tau/h+h/\tau)$ with $C_1$ as defined after~\eqref{eq:proofDec4}. Rearranging terms, we arrive at
	\begin{equation*} 
		(1\hspace{-0.05cm}+\hspace{-0.05cm} C_{\tau,h})\LLL u_h^{n+1/2}\RRR_{\Omega \setminus \Nb_\ell(\omega)}^2
		\hspace{-0.05cm}\leq\hspace{-0.05cm} (C_{\tau,h} \hspace{-0.05cm}+\hspace{-0.05cm} \tfrac12) \LLL u_h^{n+1/2}\RRR_{\Omega\setminus \Nb_{\ell-1}(\omega)}^2 
		\hspace{-0.05cm}+\hspace{-0.05cm}\tfrac{C_{\tau,h}}{2} \hspace*{-0.2cm}\sum_{\nu=1,3}\hspace{-0.05cm}\LLL u_h^{n-\nu/2}\RRR^2_{\Omega\setminus \Nb_{\ell-1}(\omega)}.
	\end{equation*}
	Dividing by $(1 + C_{\tau,h})$ on both sides and setting 
	$\gamma := \sqrt{(C_{\tau,h} + \tfrac12)/(1+C_{\tau,h})} < 1$ (note that $\delta \leq \gamma$), we further obtain
	\begin{equation*}
		\begin{aligned}
			\LLL u_h^{n+1/2}\RRR_{\Omega \setminus \Nb_\ell(\omega)}^2 
			\leq \gamma^2\big(\LLL u_h^{n+1/2}\RRR_{\Omega \setminus \Nb_{\ell-1}(\omega)}^2 
			+ \tfrac12\sum_{\nu=1,3}\LLL u_h^{n-\nu/2}\RRR^2_{\Omega\setminus \Nb_{\ell-1}(\omega)} 
			\big)
		\end{aligned}
	\end{equation*}
	and iterating the argument for $j=\ell-1,\dots,1$ we get
	\begin{align*}
		\LLL u_h^{n+1/2}\RRR_{\Omega \setminus \Nb_\ell(\omega)}^2 
		&\leq \gamma^{2\ell}\LLL u_h^{n+1/2}\RRR^2
		+ \tfrac12\sum_{j=0}^{\ell-1}\gamma^{2(\ell-j)}
		\sum_{\nu=1,3}\LLL u_h^{n-\nu/2}\RRR^2_{\Omega\setminus \Nb_{j}(\omega)}.
	\end{align*}
	From the assumption that~\eqref{eq:dec} holds for any $k < n$
	we obtain for the sum on the right-hand side 
	and $M_{u_h}:=\max_{k \leq n}\LLL u_h^{k+1/2}\RRR$
	\begin{align*}
		\tfrac12\sum_{j=0}^{\ell-1}\gamma^{2(\ell-j)}
		\sum_{\nu=1,3}\LLL u_h^{n-\nu/2}\RRR^2_{\Omega\setminus \Nb_{j}(\omega)}
		&\leq 
		\sum_{j=0}^{\ell-1}\gamma^{2(\ell-j)}(j+1)^{(n-1)}\gamma^{2j} M_{u_h}^2
		\\&
		\leq \ell(\ell+1)^{(n-1)}\gamma^{2\ell} M_{u_h}^2,
	\end{align*}
	so that altogether
	\begin{align*}
		\LLL u_h^{n+1/2}\RRR_{\Omega \setminus \Nb_\ell(\omega)}^2 
		\leq 
		(\ell+1)^n\gamma^{2\ell}
		\max_{k \leq n}\LLL u_h^{k+1/2}\RRR^2,
	\end{align*}
	which is the bound asserted in \eqref{eq:dec}.
\end{proof}

\subsection{Localized computations}

We now turn to the computation of a discrete solution to the wave equation on localized patches. Therefore, assume that the initial conditions of~\eqref{eq:CN} fulfill $u_h^1,\,u_h^{0} \in V_h(\omega)$ and $f$ has support in $\omega \subseteq \Omega$. The localized variant of~\eqref{eq:CN} now seeks $(\tilde u_h^{n})_{n \geq 2} \in V_h(\Nb_\ell(\omega))$ such that 
\begin{equation}\label{eq:CNloc}
	(\hat\partial^2_\tau\tilde u_h^n,v_h)_{\Nb_\ell(\omega)}
	+ a_{\Nb_\ell(\omega)}(\widehat{\tilde u_h^n},v_h)
	= (\widehat{f_h^{n}},v_h)_{\Nb_\ell(\omega)}
	\quad\text{for all }v_h \in V_h(\Nb_\ell(\omega))
\end{equation}
for some $\ell \in \N$ given the initial conditions $\tilde u_h^1=u_h^1,\,\tilde u_h^0=u_h^{0} \in V_h(\omega)$. 
We define $\tilde u_h^{n+1/2}$ following the convention \eqref{eq:halfstep}.
We emphasize that~\eqref{eq:CNloc} includes zero boundary conditions for $\tilde u_h^n$ on the boundary of $\Nb_\ell(\omega)$ due to the definition of the space~$V_h(\Nb_\ell(\omega))$.
Next, we use Lemma~\ref{lem:decay} in order to show that the discrete solutions to the wave equation with local data can be computed in a localized way with only a reasonable impact on the overall error. 

\begin{theorem}[Localization error]\label{thm:locerr}
	Let the assumptions of Lemma~\ref{lem:decay} hold. 
	Then 
	for $\ell \in \N$
	the error 
	$\xi^n:=\tilde u_h^{n+1/2} - u_h^{n+1/2}$
	between the solutions $(u_h^{n})_{n \geq 0}$ of~\eqref{eq:CN} 
	and $(\tilde u_h^n)_{n \geq 0}$ of~\eqref{eq:CNloc} fulfills
	\begin{equation*}
		\LLL \xi^n \RRR
		=\LLL u_h^{n+1/2} - \tilde u_h^{n+1/2}\RRR
		\lesssim C_{\tau,h} (9\ell)^{n/2}\gamma^{\ell}\max_{k \leq n} \LLL u_h^{k+1/2}\RRR
	\end{equation*}
	for any $n \in \N_0$ and $\gamma$ and $C_{\tau,h}$ as in Lemma~\ref{lem:decay}.
\end{theorem}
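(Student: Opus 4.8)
Here is how I would proceed. Set $e_h^n:=\tilde u_h^n-u_h^n$, so that by construction $e_h^0=e_h^1=0$ and, since $\tilde u_h^n\in V_h(\Nb_\ell(\omega))$ while $u_h^n$ need not vanish on the artificial boundary, one has $e_h^n=-u_h^n$ on $\Omega\setminus\Nb_\ell(\omega)$; in particular the contribution of $e_h^{n+1/2}$ to $\LLL\cdot\RRR$ on $\Omega\setminus\Nb_\ell(\omega)$ equals $\LLL u_h^{n+1/2}\RRR_{\Omega\setminus\Nb_\ell(\omega)}$, which is already estimated by \eqref{eq:dec}. Subtracting \eqref{eq:CNloc} from \eqref{eq:CN} and testing with $v_h\in V_h(\Nb_\ell(\omega))$ — which makes the domain restriction in \eqref{eq:CNloc} inert and cancels the right-hand sides — yields the Galerkin orthogonality $(\hat\partial^2_\tau e_h^n,v_h)+a(\widehat{e_h^n},v_h)=0$, equivalently, in the half-step formulation already used in the proof of Lemma~\ref{lem:decay}, $\mathcal K(e_h^{n+1/2},v_h)=(2e_h^{n-1/2}-e_h^{n-3/2},v_h)-\tfrac{\tau^2}{4}a(2e_h^{n-1/2}+e_h^{n-3/2},v_h)$ for all $v_h\in V_h(\Nb_\ell(\omega))$. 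The plan is then to bound $\LLL e_h^{n+1/2}\RRR$ by induction on $n$, mirroring the proof of Lemma~\ref{lem:decay}.

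A second ingredient is that Lemma~\ref{lem:decay} applies \emph{verbatim to the localized solution} $\tilde u_h$: its data $\tilde u_h^0=u_h^0$, $\tilde u_h^1=u_h^1$ and $f$ are supported in $\omega$, and the test functions $I_h(\tilde u_h^{k+1/2}\eta_j)$ occurring in that proof lie in $V_h(\Nb_\ell(\omega))$ for every $j\le\ell$ (the factor $\eta_j$ vanishing inside $\omega$ and $\tilde u_h^{k+1/2}$ vanishing outside $\Nb_\ell(\omega)$), so the induction carries over inside the patch. Together with a stability bound $\max_{k\le n}\LLL\tilde u_h^{k+1/2}\RRR\lesssim\max_{k\le n}\LLL u_h^{k+1/2}\RRR$ — obtained from Theorem~\ref{thm:energy} applied to \eqref{eq:CNloc}, using the norm equivalences (and $\diam\Omega\eqsim1$) to pass between $\LLL\cdot\RRR$, $\|\cdot\|_{\mathcal E}$ and $\calE_{h,\tau}$ — this provides control of $\LLL\tilde u_h^{k+1/2}\RRR_{\Nb_\ell(\omega)\setminus\Nb_{\ell-1}(\omega)}$, and hence, via $e_h^{n+1/2}=\tilde u_h^{n+1/2}-u_h^{n+1/2}$ and \eqref{eq:dec}, of $\LLL e_h^{n+1/2}\RRR$ on the boundary layer $R_\ell:=\Nb_\ell(\omega)\setminus\Nb_{\ell-1}(\omega)$.

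It then remains to estimate $\LLL e_h^{n+1/2}\RRR_{\Nb_{\ell-1}(\omega)}$, and here I would reproduce the cut-off argument \eqref{eq:proofDec1}--\eqref{eq:proofDec4} with the roles of interior and exterior interchanged. With $\eta_j$ as in \eqref{eq:eta}, the interpolants $w_j:=I_h\bigl((1-\eta_{j+1})\,e_h^{n+1/2}\bigr)$ lie in $V_h(\Nb_\ell(\omega))$ for $j\le\ell-1$ (the factor $1-\eta_{j+1}$ confining their support to $\overline{\Nb_{j+1}(\omega)}$), coincide with $e_h^{n+1/2}$ on $\Nb_j(\omega)$, and may be inserted in the orthogonality relation. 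Running the estimates of Lemma~\ref{lem:decay} with the same weighted Young's inequalities produces, schematically, a recursion of the form
\[
\LLL e_h^{n+1/2}\RRR^2_{\Nb_j(\omega)}\le\gamma^2\Bigl(\LLL e_h^{n+1/2}\RRR^2_{\Nb_{j+1}(\omega)}+\tfrac12\!\!\sum_{\nu=1,3}\!\LLL e_h^{n-\nu/2}\RRR^2_{\Nb_{j+1}(\omega)}\Bigr),\qquad 1\le j\le\ell-1,
\]
with $\gamma$ exactly as in Lemma~\ref{lem:decay}. Combining this at $j=\ell-1$ with $\LLL e_h^{n+1/2}\RRR^2_{\Nb_\ell(\omega)}=\LLL e_h^{n+1/2}\RRR^2_{\Nb_{\ell-1}(\omega)}+\LLL e_h^{n+1/2}\RRR^2_{R_\ell}$ allows one to solve for $\LLL e_h^{n+1/2}\RRR^2_{\Nb_\ell(\omega)}$, the resulting factor $(1-\gamma^2)^{-1}\eqsim C_{\tau,h}$ being precisely the origin of the single power of $C_{\tau,h}$ in the assertion (in contrast to Lemma~\ref{lem:decay}, where all such powers are absorbed into $\gamma<1$). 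Feeding in the induction hypothesis for the levels $n-1/2,n-3/2$ together with the boundary-layer bound of the previous paragraph, and adding $\LLL e_h^{n+1/2}\RRR^2_{\Omega\setminus\Nb_\ell(\omega)}=\LLL u_h^{n+1/2}\RRR^2_{\Omega\setminus\Nb_\ell(\omega)}$, yields the claimed bound; the polynomial factor $(9\ell)^{n/2}$ arises from summing the three annular/time-level contributions over the $\ell$ layers as in the final lines of the proof of Lemma~\ref{lem:decay}. The base cases $n=0,1$ are immediate, since $e_h^{1/2}=0$ and $e_h^{3/2}=\tfrac12e_h^2$ with $\mathcal K(e_h^2,v_h)=0$ for all $v_h\in V_h(\Nb_\ell(\omega))$.

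The main obstacle is the constant bookkeeping in this combined spatial--temporal induction: one must verify that the boundary-layer contribution introduced by the cut-off near $\partial\Nb_\ell(\omega)$, together with the terms at the earlier time levels, does \emph{not} accumulate an additional power of $C_{\tau,h}$ at each time step, and that the weighted Young's inequalities are applied with the same weights as in the passage from \eqref{eq:proofDec3} to the end of the proof of Lemma~\ref{lem:decay}, so that the coefficient in front of $\LLL e_h^{n-\nu/2}\RRR^2$ comes out as $\tfrac12\gamma^2$ rather than as a factor growing with $C_{\tau,h}$ — this is what produces a bound that is linear (and not exponential) in $C_{\tau,h}$. A secondary, more routine point is the stability estimate for the localized scheme and the harmless conversions between the norms $\LLL\cdot\RRR$, $\|\cdot\|_{\mathcal E}$ and the discrete energy.
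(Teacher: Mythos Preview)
Your approach has a genuine gap precisely at the point you flag as the ``main obstacle.'' When you solve the recursion at $j=\ell-1$ for $\LLL e_h^{n+1/2}\RRR^2_{\Nb_\ell(\omega)}$, the factor $(1-\gamma^2)^{-1}\eqsim C_{\tau,h}$ multiplies \emph{both} the boundary-layer term $\LLL e_h^{n+1/2}\RRR^2_{R_\ell}$ \emph{and} the previous time-level terms $\LLL e_h^{n-\nu/2}\RRR^2_{\Nb_\ell(\omega)}$; indeed $\gamma^2/(1-\gamma^2)=2C_{\tau,h}+1$. Iterating in $n$ therefore produces a bound of order $C_{\tau,h}^{\,n}$ rather than the single power $C_{\tau,h}$ in the assertion. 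The layer recursion in $j$ does not help here, since it bounds norms over \emph{smaller} patches by norms over larger ones --- this shows decay of $e_h$ \emph{into} the interior of $\Nb_\ell(\omega)$, but gives no handle on the full norm $\LLL e_h^{n+1/2}\RRR_{\Nb_\ell(\omega)}$ other than through the single step at $j=\ell-1$. The weighted Young's inequalities of Lemma~\ref{lem:decay} were tuned so that the time-level coefficient $\tfrac12\gamma^2$ appears in a recursion going the \emph{useful} direction (exterior of a large patch controlled by exterior of a smaller one); once you reverse the direction and divide by $1-\gamma^2$, that coefficient becomes $\tfrac12\gamma^2/(1-\gamma^2)\eqsim C_{\tau,h}$ and the accumulation is unavoidable within this framework.

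The paper sidesteps this by a different device: at each step it introduces an auxiliary \emph{localized} solution $z_h^{n+1}\in V_h(\Nb_\ell(\omega))$ computed from the \emph{global} previous iterates $u_h^n,u_h^{n-1}$ (not from $\tilde u_h^n,\tilde u_h^{n-1}$). The difference $\nu^n:=u_h^{n+1/2}-z_h^{n+1/2}$ satisfies $\mathcal K(\nu^n,v_h)=0$ for all $v_h\in V_h(\Nb_\ell(\omega))$ and hence a best-approximation property; choosing $w=I_h((1-\eta_\ell)u_h^{n+1/2})$ and invoking Lemma~\ref{lem:decay} gives $\LLL\nu^k\RRR\lesssim C_{\tau,h}\,\ell^{k/2}\gamma^{\ell}M_{u_h}$ --- this is where the \emph{single} power of $C_{\tau,h}$ enters. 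Meanwhile $\mu^n:=\tilde u_h^{n+1/2}-z_h^{n+1/2}$ obeys a three-term recurrence with \emph{constant} coefficients, $\LLL\mu^n\RRR\le 2\LLL\xi^{n-1}\RRR+\LLL\xi^{n-2}\RRR$, whose iteration yields a factor $3^n$ independent of $C_{\tau,h}$. The triangle inequality then gives $\LLL\xi^n\RRR\le\sum_{k\le n}3^{n-k}\LLL\nu^k\RRR$, cleanly decoupling the spatial localization error from the time propagation. This separation is the missing idea in your outline.
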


\begin{proof}
	For $n=0,1$ the result follows from $\xi^n=0$ 
	by the choice of initial conditions. 
	For $n\geq1$, we define
	$z_h^{n+1} \in V_h(\Nb_{\ell}(\omega))$ as the solution to
	\begin{equation*}
		\tau^{-2}(z_h^{n+1} - 2 u_h^n + u_h^{n-1},v_h)_{\Nb_\ell(\omega)} 
		+ \tfrac14 a_{\Nb_\ell(\omega)}( z_h^{n+1} + 2 u_h^n + u_h^{n-1}, v_h)
		= (\widehat{f_h^n},v_h)_{\Nb_\ell(\omega)}
	\end{equation*}
	for all $v_h \in V_h(\Nb_\ell(\omega))$.
	We define
	$z_h^{n+1/2}$ following \eqref{eq:halfstep} and observe that
	$\mu^n:=\tilde u_h^{n+1/2} - z_h^{n+1/2}$ solves
	\begin{equation*}
		\mathcal K_{\Nb_\ell(\omega)}(\mu^n,v_h) 
		= (2 \xi^{n-1} - \xi^{n-2},v_h)_{\Nb_\ell(\omega)}
		- \tfrac{\tau^2}4a_{\Nb_\ell(\omega)}(2\xi^{n-1} + \xi^{n-2}, v_h)
	\end{equation*}
	for $n \geq 2$. 
	With the choice $v_h = \mu^n$, this leads to 
	\begin{equation*}
		\LLL\mu^n\RRR
		\leq 2 \LLL \xi^{n-1}\RRR + \LLL\xi^{n-2}\RRR .
	\end{equation*}
	Using this estimate and the triangle inequality repeatedly, 
	we obtain with $\nu^{n}:=u_h^{n+1/2} - z_h^{n+1/2}$ that
	\begin{equation}\label{eq:proofLoc1}
		\LLL \xi^{n} \RRR
		\leq \LLL \nu^{n} \RRR 
		+ \LLL \mu^n \RRR
		\leq \LLL \nu^{n} \RRR
		+ {2} \LLL \xi^{n-1} \RRR
		+ \LLL \xi^{n-2} \RRR
		\leq \sum_{k=0}^{n} 3^{n-k} \LLL \nu^{k} \RRR
	\end{equation}
	since the initial conditions for~\eqref{eq:CN} and~\eqref{eq:CNloc} coincide.
	Due to 
	\begin{equation*} 
		\mathcal K(\nu^{k},v_h)
		=
		\mathcal K(u_h^{k+1/2} - z_h^{k+1/2},v_h)
		= 0
		\qquad\text{for all }
		v_h \in V_h(\Nb_\ell(\omega)), 
	\end{equation*}
	we obtain the best-approximation property
	\begin{equation*}
		\LLL \nu^{k}\RRR
		\leq  \LLL u_h^{k+1/2} - w\RRR
		\quad\text{for any }w\in V_h(\Nb_\ell(\omega)).
	\end{equation*}
	With the cutoff function $\eta_\ell$ defined in~\eqref{eq:eta} and the nodal interpolation operator~$I_h$, we 
	define the particular choice $w:=I_h((1-\eta_\ell)u_h^{k+1/2})$.
	With the arguments from the proof of Lemma~\ref{lem:decay},
	particularly~\eqref{eq:stabIh} and~\eqref{eq:proofDec3}, 
	we get with 
	$R_\ell =  \Nb_{\ell}(\omega) \setminus \Nb_{\ell-1}(\omega)$ that
	\begin{equation*}
		\begin{aligned}    
			\LLL \nu^{k}\RRR
			&\leq \LLL u_h^{k+1/2} - w\RRR
			= \LLL u_h^{k+1/2} - w\RRR_{\Omega\setminus\Nb_{\ell-1}(\omega)} 
			\\
			&
			\leq \LLL u_h^{k+1/2}\eta_\ell\RRR_{\Omega\setminus\Nb_{\ell-1}(\omega)} 
			+ \LLL (\id - I_h)((1-\eta_\ell)u_h^{k+1/2})\RRR_{\Omega\setminus\Nb_{\ell-1}(\omega)}
			\\
			&
			\lesssim  C_{\tau,h}\,\LLL u_h^{k+1/2}\RRR_{\Omega\setminus\Nb_{\ell-1}(\omega)}
		\end{aligned}
	\end{equation*}
	with $C_{\tau,h}$ from Lemma~\ref{lem:decay}.  
	Inserting this bound in
	\eqref{eq:proofLoc1} and using Lemma~\ref{lem:decay}, we further get 
	\begin{equation*}
		\begin{aligned}
			\LLL \xi^n\RRR
			&\lesssim C_{\tau,h}\sum_{k=0}^{n}3^{n-k}\ell^{k/2}\gamma^{\ell-1} \max_{j \leq k}\LLL u_h^{j+1/2}\RRR
			\lesssim 
			C_{\tau,h}\,3^{n+1}\ell^{n/2}\gamma^{\ell-1} \max_{k \leq n}\LLL u_h^{k+1/2}\RRR.
		\end{aligned}
	\end{equation*}
	Therefore, the asserted estimate follows.  
\end{proof}

\begin{remark}[Choice of $\ell$]\label{rem:ell}
	Aiming for an error of order $\mathcal O(\varepsilon)$ in Theorem~\ref{thm:locerr} for a fixed $n \in \N$, we may set
	\begin{equation*}
		C_{\tau,h}(9\ell)^{n/2} \gamma^\ell = \varepsilon,
	\end{equation*}
	from which we get that
	\begin{equation*}
		\log_\gamma(C_{\tau,h}) + \tfrac{n}{2}\log_\gamma(9\ell) + \ell = \log_\gamma(\varepsilon).
	\end{equation*}
	This results in 
	\begin{equation*}
		\ell \eqsim \tfrac{1}{|\log \gamma|} \big( n \log \ell + |\log \varepsilon| + \log C_{\tau,h} \big)
		.
	\end{equation*}
	Using that 
	\begin{equation*}
		|\log \gamma| = \tfrac12|\log\big((C_{\tau,h} + \tfrac12)/(1+C_{\tau,h})\big)| = \tfrac12 \log(1+\tfrac{1}{1 + 2 C_{\tau,h}})
	\end{equation*}
	and $\log(1+\tfrac1x) = - \sum_{n=1}^\infty n^{-1}(-x)^{-n} \leq \tfrac1x$, which is a good approximation for $x \gg 1$, we deduce the essential scaling
	\begin{equation*}
		\frac{1}{\lvert \log \gamma\rvert} \eqsim C_{\tau,h} 
	\end{equation*}
	and thus
	\begin{equation*}
		\ell \eqsim C_{\tau,h} n \log \ell + C_{\tau,h}\lvert\log \varepsilon\rvert + C_{\tau,h} \log C_{\tau,h}.
	\end{equation*}
	Note that $\ell \gtrsim C_{\tau,h} n \log \ell$ is satisfied for 
	$\ell \gtrsim -(C_{\tau,h} n)\,W_{-1}(-\tfrac{1}{C_{\tau,h} n})$, 
	where $W_{-1}$ denotes a branch of the usual Lambert $W$ function (product logarithm).
	For $0<x\leq \exp(-1)$ and $y:=W_{-1}(-x)$, the functional identity
	$-x=y\exp(y)$ and elementary estimates yield 
	\begin{equation*} 
		1-\exp(-1) \leq \frac{\log(1/x)}{-W_{-1}(-x)} \leq 1,
	\end{equation*}
	therefore $\ell\gtrsim (C_{\tau,h} n) \log(C_{\tau,h} n)$
	implies $\ell \gtrsim -(C_{\tau,h} n)\,W_{-1}(-\tfrac{1}{C_{\tau,h} n})$
	and furthermore $\ell \gtrsim C_{\tau,h} \log C_{\tau,h}$.
	This gives the sufficient condition
	\begin{equation}\label{eq:scalingEll}
		\ell 
		\gtrsim
		C_{\tau,h} \Big(n\,\big(\log n + \log C_{\tau,h}\big)  
		+ \lvert\log\varepsilon\rvert\Big)
	\end{equation}
	for the scaling of $\ell$. 
	Thus, if~\eqref{eq:scalingEll} is satisfied, 
	there exists a constant $C > 0$, which is independent of $n$, $\varepsilon$, and $\ell$, 
	but depends on $d$, $\alpha$, and $\beta$, such that
	the estimate of Theorem~\ref{thm:locerr} simplifies to 
	\begin{align*}
		\LLL u_h^{n+1/2} - \tilde u_h^{n+1/2} \RRR
		\lesssim C_{\tau,h} (9\ell)^{n/2}\gamma^{\ell}\max_{k \leq n}\LLL u_h^{k+1/2}\RRR
		\leq C\, \varepsilon \max_{k \leq n}\LLL u_h^{k+1/2}\RRR.
	\end{align*}
	That is, to retain a reasonable approximation after $n$ time steps when computing the discrete solution to~\eqref{eq:CN} locally (given that $f$, $u_h^1$, and $u_h^0$ have support in a local domain~$\omega$), 
	we essentially need to extend $\omega$ by 
	$\mathcal O(\tau n/h \log (\tau n/h))$ layers of fine elements
	in the relevant regime $\tau\geq h$.
	We emphasize that in order to capture the physically expected wave cone,
	an extension of the support $\omega$ by at least 
	$\mathcal O(\tau n/h)$ additional layers is reasonable,		
	which corresponds to the number of layers that a locally defined wave travels within the time frame $\tau n$ (provided that the wave speed is of order $\mathcal O(1))$. 
	In that regard, our result only requires a logarithmic overhead to incorporate the fact that the discrete wave is not fully local. We also refer to Section~\ref{ss:choiceparams} for a discussion of the choice of parameters. 
\end{remark}

\section{Local superposition method}\label{s:LSM}

In this section, we utilize the localization result of the previous section to construct a fairly simple local superposition strategy that is easily parallelizable. To obtain local data, initial conditions and the right-hand side are localized by a partition of unity, where we exploit the linearity of the wave equation. The approach may be understood as a domain decomposition strategy in space on successive coarse time intervals that does neither require multiple iterations nor a sophisticated definition of boundary conditions between the different sub-regions.

\subsection{The algorithm}
Before we state the full algorithm, we require some additional definitions. 
Let $\calT_H$ be a regular and quasi-uniform mesh with mesh size $H$ and assume for simplicity that the mesh $\calT_h$ is a refinement of $\calT_H$ and $H/h \in \N$.
The assumption of nestedness is not necessary but simplifies the overall presentation. 
We also introduce the coarse time step size $T$ with $T/\tau \in \N$ to be specified later.
Let $\Lambda_i,\,i=1,\ldots,M$ be the nodal $Q_1$ basis functions corresponding to the mesh $\calT_H$. In particular,
\begin{equation}\label{eq:Lambda_properties}
	\sum_{i=1}^M \Lambda_i \equiv 1,
	\quad
	0 \leq \Lambda_i \leq 1,\,\;i = 1,\ldots,M,
	\quad
	\|\nabla \Lambda_i\|_{L^\infty(\Omega)} \lesssim H^{-1} .
\end{equation}
We denote with $\omega_i = \supp \Lambda_i$ the support of $\Lambda_i$ that consists of $2^d$ (coarse) elements. 

The idea of our approach is to approximate the solution $(u_h^n)_n$ of~\eqref{eq:CN} by localized computations only. More precisely, let $u_h^1,\,u_h^0 \in V_h$ and $f_h^n \in V_h$, $n \in \N_0$ be the (possibly globally supported) discrete initial conditions and right-hand side functions of~\eqref{eq:CN}. 
For a given~$i$, let 
\begin{equation}\label{eq:dataLoc}
	f_{h,i}^n := I_h(\Lambda_if_h^n), \quad u_{h,i}^1 = I_h(\Lambda_iu_h^1), \quad u_{h,i}^0 = I_h(\Lambda_iu_h^0). 
\end{equation} 
These initial conditions and right-hand side functions are only locally supported on the domain $\omega_i$. Therefore, we can compute the respective solutions locally as in~\eqref{eq:CNloc}. More precisely, we compute the local solutions
\begin{equation}\label{eq:CNloci}
	(\hat\partial^2_\tau\bar u_{h,i}^n, v_h)_{\Nb_\ell(\omega_i)}
	+ a_{\Nb_\ell(\omega_i)}(\widehat{\bar{u}_{h,i}^n},  v_h)
	= (\widehat{f_{h,i}^{n}}, v_h)_{\Nb_\ell(\omega_i)}
	\quad\text{for all } v_h \in V_h(\Nb_\ell(\omega_i)),
\end{equation}  
for $n \leq T/\tau$, and with
\begin{equation}\label{eq:choiceEll}
	\ell \gtrsim C_{\tau,h} \Big(\tfrac{T}{\tau} 
	\log \big(C_{\tau,h}\tfrac{T}{\tau}\big) + |\log h| 
	+ |\log \vartheta| + \tfrac{t_\mathrm{fin}}{T}|\log H|\Big),
\end{equation}
where $C_{\tau,h} \eqsim \max\{\tau/h,h/\tau\}$ as before. 
The stated assumption on $\ell$ will become clear in Theorem~\ref{thm:superpositionError} and will be further discussed in Section~\ref{ss:choiceparams}. Note that the hidden constant in~\eqref{eq:choiceEll} depends on $\alpha$ and $\beta$. However, these dependencies are not severe as observed in the numerical examples.
Recall that the local problems~\eqref{eq:CNloci} are solved with zero boundary conditions. In particular, specifically designed conditions to connect different patches are not required.
The aim of our local superposition strategy is to achieve an error of order $\mathcal{O}(\vartheta)$, where $\vartheta$ 
optimally scales like the error of the global Crank--Nicolson scheme, i.e., $\vartheta \eqsim h + \tau^2$. This is due to the fact that the distance to the exact solution cannot scale better than the error of the classical Crank--Nicolson solution. 
Given the respective initial conditions, these local solutions are completely independent of each other, such that the computations can be parallelized. If we are interested in the global solution at a certain time step, we can sum up the corresponding local contributions.

After $\nr := T/\tau$ time steps, we reset the algorithm to avoid the computation on patches that require a too large choice of $\ell$. More precisely, the summed up discrete solutions 
\begin{equation*}
	\bar u_h^{\nr+1} := \sum_{i=1}^M u_{h,i}^{\nr+1}\qquad\text{and}\qquad\bar u_h^{\nr} := \sum_{i=1}^M u_{h,i}^{\nr}
\end{equation*}
can be used as new initial conditions, which are once again decomposed into local contribution through the partition of unity $\{\Lambda_i\}_{i=1}^M$. This strategy is repeated after another $\nr$ time steps and ensures that the computational domains do not grow too much. The usage of the notation $\bar{(\bullet)}$ instead of $\tilde{(\bullet)}$ refers to these local solutions that are regularly reset.

\begin{algorithm}[t]
	\caption{The local superposition method.}\label{algo:wave}
	\begin{algorithmic}
		\State 
		\Require $f$, $u_0$, $v_0$, $H$, $h$, $T$, $\tau$, $t_\mathrm{fin}$, $\ell$
		\State $N_T:=t_\mathrm{fin}/T$
		\State Compute initial conditions $\mathfrak{a}$ at time $0$ and $\mathfrak{b}$ at time $\tau$ using $f$, $u_0$, $v_0$
		\For{$k=1,\dots,N_T$}
		\For{$i=1,\dots,M$}
		\State set $f_i^{(k)}=\Lambda_i f(\bullet,k\,T+\bullet)$
		\State
		$(\mathfrak{a}_i,\mathfrak{b}_i)
		=
		\mathsf{CN}(\Nb_{\ell}(\omega_i),T,h,\tau,f_i^{(k)},\Lambda_i\mathfrak{a},\Lambda_i\mathfrak{b})$
		\EndFor
		\State $\mathfrak{b} := \sum_{i=1}^M \mathfrak{b}_i$ 
		\State $\mathfrak{a} :=  \sum_{i=1}^M \mathfrak{a}_i$ 
		\State $\bar u^{kT}_h := \mathfrak{a}$
		\EndFor
		\Ensure $\bar u_h^{T}$, $\bar u_h^{2T}$, \ldots
		\State
		\Function{$\mathsf{CN}$}{$\omega$, $t$, $h$, $\tau$, $g$, ${a}$, ${b}$}
		\State Solve Crank--Nicolson over the domain $\omega$ with time horizon 
		$t + \tau$, mesh size~$h$, 
		\State step size $\tau$,
		right-hand side $g$, initial data ${a}$, ${b}$; return the solution at the last 
		\State two time steps (i.e., at $t$ and $t+\tau$)
		
		\EndFunction
	\end{algorithmic}
\end{algorithm}

For an illustration of the method, we refer to Algorithm~\ref{algo:wave}. It requires the given data, the coarse and fine mesh sizes and time steps $H$, $h$, $T$, and $\tau$, as well as the patch size $\ell$. The main ingredient is a standard (localized) Crank--Nicolson method on different patches combined with the successive localization of the data. The algorithm is written in a way that only the global solutions $\bar u_h^T$, $\bar u_h^{2T}$, etc.\ at the coarse time points $kT$ are stored, where $k$ indicates the number of resets. If the solution at a specific (fine) time point between these coarse time points is required, the respective local solutions could be stored and summed up as well. Alternatively, they can be cheaply computed retroactively starting from the stored solution at the previous coarse time point.

\subsection{Error analysis}

To bound the error between the solution $\bar u_h^n$ and the global Crank--Nicolson solution~$u_h^n$ at the time point $n\tau$, we 
now state and prove an error estimate.
The proof makes use of the discrete norm
\begin{equation*}
	\|v_h\|_{{\mathcal E}_h}:=\big(\LLL\Dt  v_h\RRR^2
	+ \tau^{-2}\LLL v_h\RRR^2 \big)^{1/2}
\end{equation*}
for any $v_h \in V_h$ (that we implicitly assume to be defined for successive time steps). 
We obtain as a consequence of the upper bound on $A$
and an inverse estimate that
\begin{equation*}
	\|\Dt v_h\|^2 \leq
	\LLL\Dt v_h\RRR^2
	\lesssim (1+\tau^2h^{-2}) \|\Dt v_h\|^2.
\end{equation*}
Furthermore, the Poincar\'e-Friedrichs estimate
and the lower bound on $A$ show
\begin{equation*}
	\|v_h\|_a^2
	\leq
	4\tau^{-2} \LLL v_h \RRR^2
	\lesssim
	\tau^{-2} \|v_h\|_a^2.
\end{equation*}
From these norm equivalences
we deduce that the discrete norm 
is equivalent to the 
energy norm in the sense that 
\begin{equation}\label{eq:equivnorm}
	\min\{\tau,h/\tau\}\, \| v_h\|_{{\mathcal E}_h} 
	\lesssim
	\| v_h\|_{{\mathcal E}} \lesssim \| v_h\|_{{\mathcal E}_h}.
\end{equation}
The (hidden) constants depend on the bounds on $A$.
Apart from these norm equivalences, we also make use of the following observation: for any $n \in \N_0$, we have for the non-localized Crank--Nicolson solution~$u_h^n$ (cf.~\eqref{eq:CN}) that
$
u_h^n = \sum_{i=1}^M u_{h,i}^n,
$
where $u_{h,i}^n$, $m+1 < n \leq (k+1)\nr + 1$ with $m = k\,\nr$ for some $k \in \N_0$ is the solution to
\begin{equation}\label{eq:CNDDi}
	(\hat\partial^2_\tau u_{h,i}^n,v_h)_{\Nb_\ell(\omega_i)} 
	+
	a_{\Nb_\ell(\omega_i)}(\widehat{u_{h,i}^n}, v_h)
	= (\widehat{f_{h,i}^{n}},v_h)_{\Nb_\ell(\omega_i)}
	\quad\text{for all }v_h \in V_h(\Nb_\ell(\omega_i))
\end{equation}
with initial conditions $u_{h,i}^{m+1} := I_h(\Lambda_iu_h^{m+1})$ and $u_{h,i}^m := I_h(\Lambda_iu_h^m)$.
This follows from the linearity of the scheme~\eqref{eq:CN} (with respect to the right-hand side and the initial conditions) and the observation that by construction of the localized initial data and right-hand side in~\eqref{eq:dataLoc}, it holds that
\begin{equation*}
	f_h^n = \sum_{i=1}^Mf_{h,i}^n,\qquad u_h^{m+1} = \sum_{i=1}^Mu_{h,i}^{m+1}, \qquad u_h^{m} = \sum_{i=1}^Mu_{h,i}^m.
\end{equation*}
We now have all tools at hand to prove an error estimate for the local superposition method.

\begin{theorem}[Error of the local superposition method]\label{thm:superpositionError}
	Let $\ell$ be chosen as in~\eqref{eq:choiceEll} and $\vartheta > 0$. 
	Then for any $n \in \N_0$, $n \leq t_\mathrm{fin}/\tau =:N$, we have the error estimate
	\begin{equation*}
		\| \bar u_h^{n+1/2} - u_h^{n+1/2} \|_{\mathcal E}
		\lesssim
		\vartheta \Big(\|u_{h}^{1/2}\|_{\mathcal E}
		+
		\sum_{j=1}^{N} \tau \big\|\widehat{f_h^{j}}\big\| \Big),
	\end{equation*}
	where the hidden constant depends on 
	$\alpha,\,\beta$, and $d$. 
\end{theorem}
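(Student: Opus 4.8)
The plan is to control the error by combining the single‑patch localization estimate of Theorem~\ref{thm:locerr} with the linearity decomposition $u_h^n = \sum_{i=1}^M u_{h,i}^n$, and then to propagate the error incurred at each reset through the stability estimate~\eqref{eq:stab} of Theorem~\ref{thm:energy}. First I would fix one coarse time block, say $m\tau \le t \le (m+1)T$ with $m = k\,\nr$, and compare on this block the \emph{exact} localized pieces $u_{h,i}^n$ solving~\eqref{eq:CNDDi} with the \emph{algorithmic} pieces $\bar u_{h,i}^n$ solving~\eqref{eq:CNloci}. These two differ only in two respects: (i) the initial data for $\bar u_{h,i}$ is obtained by applying $I_h(\Lambda_i\,\cdot)$ to the \emph{reset} global solution $\bar u_h^{m}$ rather than to the true $u_h^{m}$, and (ii) within the block they solve the same localized Crank--Nicolson equation. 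Hence, by the linearity and stability of the localized scheme (again the $\mathcal K$‑coercivity / energy argument underlying Theorem~\ref{thm:energy} applied on $\Nb_\ell(\omega_i)$), the block‑internal discrepancy $\sum_i(\bar u_{h,i}^n - u_{h,i}^n)$ is bounded by the energy norm of $\bar u_h^{m} - u_h^{m}$ times a constant, plus — and this is the genuinely new error — the localization error from Theorem~\ref{thm:locerr} committed on \emph{this} block (where $n$ is replaced by the block length $\nr = T/\tau$). Because $\ell$ is chosen as in~\eqref{eq:choiceEll}, Remark~\ref{rem:ell} guarantees that the factor $C_{\tau,h}(9\nr)^{\nr/2}\gamma^\ell$ is $\lesssim \vartheta/(H^{?}\dots)$; the extra terms $|\log h|$, $|\log\vartheta|$, $(t_\mathrm{fin}/T)|\log H|$ in~\eqref{eq:choiceEll} are precisely what is needed to absorb (a) the stability constants of $I_h(\Lambda_i\,\cdot)$, which cost a factor $\sim H/h$ or a logarithm per patch, (b) the number $N_T = t_\mathrm{fin}/T$ of resets over which errors accumulate, and (c) the norm equivalence~\eqref{eq:equivnorm} relating $\LLL\cdot\RRR$, $\|\cdot\|_{\mathcal E}$, and $\|\cdot\|_{{\mathcal E}_h}$.

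The second step is the recursion over blocks. Writing $e^m := \|\bar u_h^{m+1/2} - u_h^{m+1/2}\|_{\mathcal E}$ (and the analogous quantity one step earlier), the block analysis yields a bound of the shape $e^{(k+1)\nr} \le (1 + C\,\delta)\, e^{k\nr} + \vartheta\,R_k$, where $R_k = \|u_h^{(k\nr)+1/2}\|_{\mathcal E} + \sum_{j} \tau\|\widehat{f_h^j}\|$ collects the data over the block and $\delta$ is made small (again by the choice of $\ell$). Iterating this discrete Grönwall‑type inequality over the at most $N_T = t_\mathrm{fin}/T = \mathcal O(1)$ blocks, and using the global stability bound~\eqref{eq:stab} to replace each intermediate $\|u_h^{(k\nr)+1/2}\|_{\mathcal E}$ by $\|u_h^{1/2}\|_{\mathcal E} + \sum_{j=1}^{N}\tau\|\widehat{f_h^j}\|$, produces the asserted estimate with a constant depending only on $\alpha,\beta,d$ (the $\mathcal O(1)$ number of blocks and the $(1+C\delta)^{N_T}$ factor are harmless). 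Finally I would translate everything from the triple‑bar norm back to $\|\cdot\|_{\mathcal E}$ using~\eqref{eq:equivnorm}, checking that the powers of $\tau$ and $h/\tau$ lost in that equivalence are exactly the ones compensated by the $|\log h|$ term in~\eqref{eq:choiceEll}.

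The main obstacle, I expect, is bookkeeping the reset mechanism cleanly: at each reset the global object $\bar u_h^{m}$ is re‑split through the partition of unity, so one must verify that $\sum_i I_h(\Lambda_i \bar u_h^m) = \bar u_h^m$ (true since $\sum_i\Lambda_i\equiv 1$ and $I_h$ is linear and reproduces $V_h$) and, more delicately, that the \emph{localization} error does not compound multiplicatively across resets — i.e.\ that restarting with a slightly perturbed but still $V_h$‑conforming datum only triggers Theorem~\ref{thm:locerr} afresh rather than amplifying the previous error. This is why the analysis is organized so that within each block one compares against the \emph{re‑started exact} solution (the $u_{h,i}^n$ of~\eqref{eq:CNDDi} with initial data from $u_h^m$, not from $\bar u_h^m$), isolating a pure localization error per block plus a clean propagated‑error term. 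A secondary technical point is controlling $\|I_h(\Lambda_i v_h)\|_{\mathcal E}$ uniformly in $i$; here one uses~\eqref{eq:Lambda_properties}, the interpolation estimates~\eqref{eq:nodalInt}, and an inverse inequality on the fine mesh, at the price of the $|\log H|$/$|\log h|$ factors already budgeted in~\eqref{eq:choiceEll}.
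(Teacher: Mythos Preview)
Your overall architecture matches the paper's: introduce at each reset an auxiliary localized solution $\tilde z_h$ started from the \emph{exact} Crank--Nicolson data $u_h^m$, split $\bar u_h - u_h = (\bar u_h - \tilde z_h) + (\tilde z_h - u_h)$, bound the second piece via Theorem~\ref{thm:locerr}, and bound the first via stability of the localized scheme applied to a homogeneous equation with initial data $I_h(\Lambda_i(\bar u_h^m - u_h^m))$. The decomposition and the role of the auxiliary solution are correctly identified.

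However, your recursion is wrong in a way that matters. You write $e^{(k+1)\nr} \le (1+C\delta)\,e^{k\nr} + \vartheta R_k$ with $\delta$ made small by the choice of~$\ell$, and you treat $N_T = t_{\mathrm{fin}}/T$ as $\mathcal O(1)$. Neither holds. When the previous error $\bar e^m := \bar u_h^m - u_h^m$ is re-split through the partition of unity, the energy norm of $I_h(\Lambda_i \bar e^m)$ picks up $\|\nabla\Lambda_i\|_{L^\infty} \lesssim H^{-1}$ from the product rule (this is the computation leading to~\eqref{eq:gloloc3} in the paper): one gets
\[
\|\bar e_z^{n+1/2}\|_{\mathcal E} \lesssim H^{-1}\,\|\bar e^{m+1/2}\|_{\mathcal E},
\]
and this $H^{-1}$ factor is \emph{independent of $\ell$}. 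It is neither $(1+C\delta)$ nor $\sim H/h$ as you suggest. Moreover $N_T = t_{\mathrm{fin}}/T =: \kappa$ need not be bounded; for the natural choice $T\eqsim H$ it scales like~$H^{-1}$.

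The paper therefore closes the argument differently from what you propose: it does not keep the propagation factor close to~$1$, but instead makes the \emph{per-block localization error} extra small. In Remark~\ref{rem:ell} one takes $\varepsilon \eqsim \vartheta\,\min\{\tau,h/\tau\}\,H^{\kappa+2}$, so that the localization contribution on each block is $\lesssim \vartheta H^{\kappa+1}\bigl[\|u_h^{1/2}\|_{\mathcal E}+\mathcal F_1^N\bigr]$. Iterating the recursion
\[
\|\bar e^{(k+1)\nr+1/2}\|_{\mathcal E} \;\lesssim\; H^{-1}\,\|\bar e^{k\nr+1/2}\|_{\mathcal E} + \vartheta H^{\kappa+1}\bigl[\|u_h^{1/2}\|_{\mathcal E}+\mathcal F_1^N\bigr]
\]
over $\kappa$ blocks produces the geometric sum $\vartheta H^{\kappa+1}\sum_{j\le\kappa} H^{-j}\lesssim \vartheta$. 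This is precisely what the term $(t_{\mathrm{fin}}/T)|\log H|$ in~\eqref{eq:choiceEll} buys: it equals $\kappa\,|\log H|$, i.e.\ the cost of the factor $H^{\kappa}$ hidden in~$\varepsilon$, not merely ``the number of resets'' as a bounded count. Similarly, the factor $\min\{\tau,h/\tau\}$ in~$\varepsilon$ compensates the loss in the norm equivalence~\eqref{eq:equivnorm} when passing between $\|\cdot\|_{\mathcal E_h}$ and $\|\cdot\|_{\mathcal E}$, which is what the $|\log h|$ term in~\eqref{eq:choiceEll} covers. Your bookkeeping would not close without recognizing that the $H^{-1}$ per reset is unavoidable and must be paid for by over-localizing each block.
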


\begin{proof}[Proof of Theorem~\ref{thm:superpositionError}]
	Let a time step $n \in \N$ be fixed and let $m < n -1$ be the point in time where the domain decomposition strategy was re-started for the last time (or $m=0$ for $n \leq \nr + 1$). We denote with $\tilde z_h^n$, $n \geq m$ the auxiliary localized solution computed as in~\eqref{eq:CNloci} from the starting values
	\begin{equation*}
		\tilde z_{h,i}^{m+1} := I_h(\Lambda_iu_h^{m+1}),\qquad \tilde z_{h,i}^{m} := I_h(\Lambda_iu_h^{m})
		\qquad 
		\text{for } i = 1,\ldots,M.
	\end{equation*}
	By Theorem~\ref{thm:locerr} and Remark~\ref{rem:ell}, we can 
	guarantee with the choice of~$\ell$ in~\eqref{eq:choiceEll} that 
	\begin{equation}\label{eq:erri}
		\LLL u_{h,i}^{n+1/2} - \tilde z_{h,i}^{n+1/2} \RRR 
		\lesssim \vartheta \,
		\min\{\tau,h/\tau\}
		\,H^{\kappa+2} \max_{m \leq k \leq m+\nr}\LLL u_{h,i}^{k+1/2}\RRR
	\end{equation}
	for $m \leq n \leq m+\nr$, 
	where we abbreviate
	$\kappa:=t_\mathrm{fin}/T$
	and explicitly choose $\varepsilon \eqsim \vartheta\min\{\tau,h/\tau\}\, H^{\kappa + 2}$. 
	Note that the result in Theorem~\ref{thm:locerr} holds for the discrete temporal derivative as well, employing linearity arguments. That is, in our setting we also have
	\begin{equation}\label{eq:erri2}
		\LLL \Dt u_{h,i}^{n+1/2} - \Dt\tilde z_{h,i}^{n+1/2}\RRR
		\lesssim \vartheta\,
		\min\{\tau,h/\tau\}
		\,H^{\kappa+2}\max_{m \leq k \leq m+\nr} \LLL \Dt u_{h,i}^{k+1/2}\RRR.
	\end{equation}

	We now estimate the global error. 
	We define $\bar e^{n}:=\bar u_h^{n} - u_h^{n}$
	as well as
	$e_z^{n}:=\tilde z_h^{n} - u_h^{n}$
	and $\bar e_z^{n}:=\tilde z_h^{n} - \bar u_h^{n}$.
	By the triangle inequality, we have
	\begin{equation}\label{eq:proofGlobal}
		\| \bar e^{n+1/2} \|_{\mathcal E}
		\leq 
		\| \bar e_z^{n+1/2} \|_{\mathcal E} 
		+ \|e_z^{n+1/2}\|_{\mathcal E}
		.
	\end{equation}
	The last term on the right-hand side can be estimated using~\eqref{eq:equivnorm}
	and the decomposition into local subproblems.
	Indeed, abbreviating $e_{z,i}^n= \tilde z_{h,i}^{n} - u_{h,i}^{n}$,
	we have from~\eqref{eq:equivnorm}
	\begin{equation*}
		\| e_z^{n+1/2} \|_{\mathcal E}
		\lesssim \| e_z^{n+1/2}\|_{\mathcal E_h}
		\lesssim
		\Big(\sum_{i=1}^M  \| e_{z,i}^{n+1/2}\|_{\mathcal E_h}^2\Big)^{1/2}
	\end{equation*}
	and deduce from \eqref{eq:erri} and \eqref{eq:erri2} that
	\begin{equation}\label{eq:gloloc}
		\| e_z^{n+1/2} \|_{\mathcal E}
		\lesssim 
		\vartheta H^{\kappa+2}
		\Big(
		\sum_{i=1}^M 
		\max_{m < k \leq m+\nr}
		\min\{\tau^2,(h/\tau)^2\}
		\| u_{h,i}^{k+1/2}\|_{\mathcal E_h}^2
		\Big)^{1/2}.
	\end{equation}
	In what follows we use the shorthand notation
	\begin{equation*}
		\mathcal F_{r,i}^k:=\sum_{j=r}^{k} \tau \big\|\Lambda_i\widehat{f_h^{j}}\big\|
		\quad\text{and}\quad
		\mathcal F_r^k:=\sum_{j=r}^{k} \tau \big\|\widehat{f_h^{j}}\big\|.
	\end{equation*}
	With~\eqref{eq:equivnorm} and the stability estimate in Theorem~\ref{thm:energy},
	we get
	\begin{align*}
		\min\{\tau,h/\tau\}
		\| u_{h,i}^{k+1/2}\|_{\mathcal E_h}
		& \lesssim \|u_{h,i}^{m+1/2}\|_{\mathcal E , \omega_i} 
		+  \mathcal F_{m,i}^{m+\nr}
		\\
		&
		\lesssim (1+H^{-1}) \| u_{h}^{m+1/2}\|_{\mathcal E,\omega_i}
		+  \mathcal F_{m,i}^{m+\nr},
	\end{align*}
	where we use the definition of $u_{h,i}^{m+1/2}$, the product rule,
	and the bound \eqref{eq:Lambda_properties} in the last step.
	Going back to~\eqref{eq:gloloc} and leveraging the limited 
	overlap of the supports~$\omega_i$, we obtain
	\begin{equation}\label{eq:gloloc2}
		\| e_z^{n+1/2} \|_{\mathcal E}
		\lesssim \vartheta H^{\kappa+1}\Big[\| u_{h}^{m+1/2}\|_{\mathcal E} 
		+  \mathcal F_m^{m+\nr}\Big] 
		\lesssim \vartheta H^{\kappa+1}\Big[\|u_{h}^{1/2}\|_{\mathcal E} 
		+  \mathcal F_1^N\Big].
	\end{equation}
	To bound the first term on the right-hand side of~\eqref{eq:proofGlobal}, 
	we observe that the functions
	$\bar e_{z,i}^n$,
	$n > m+1$ solve the equation 
	\begin{equation*}
		(\hat\partial^2_\tau \bar e_{z,i}^n,v_h)_{\Nb_\ell(\omega_i)}
		+ a_{\Nb_\ell(\omega_i)}(\widehat{\bar e_{z,i}^n}, v_h)
		= 0
		\quad\text{for all }v_h \in V_h(\Nb_\ell(\omega_i)).
	\end{equation*}
	With Theorem~\ref{thm:energy}, we obtain the error estimate 
	\begin{align*}
		\| \bar e_{z,i}^{n+1/2}\|_{\mathcal E,\Nb_\ell(\omega_i)}
		\lesssim \|\bar e_{z,i}^{m+1/2}\|_{\mathcal E,\omega_i}
		\lesssim \|\Dt \bar e^{m+1/2}\|_{\omega_i} + (1+H^{-1})\|\bar e^{m+1/2}\|_{a,\omega_i},
	\end{align*}
	where we have used the initial condition
	$\bar e_{z,i}^{m+1/2}= -I_h(\Lambda_i \bar e^{m+1/2})$,
	the stability~\eqref{eq:H1cutoff}, and the
	product rule with the bound from \eqref{eq:Lambda_properties}.
	In particular, we have
	\begin{equation}\label{eq:gloloc3}
		\|\bar e_z^{n+1/2}\|_{\mathcal E}
		\lesssim \|\Dt \bar e^{m+1/2}\| + H^{-1}\|\bar e^{m+1/2}\|_a.
	\end{equation}
	Finally, going back to~\eqref{eq:proofGlobal} and using~\eqref{eq:gloloc2} and~\eqref{eq:gloloc3} multiple times, we obtain
	\begin{align*}
		\|\bar e^{n+1/2}\|_{\mathcal E} 
		&
		\lesssim \sum_{j=1}^{N/\nr} \vartheta H^{\kappa+1} H^{-j}\Big[\| u_{h}^{1/2}\|_{\mathcal E}  +  \mathcal F_1^N \Big]
		\\
		& \lesssim \vartheta  H^{\kappa+1} H^{-N/\nr-1}\Big[\| u_{h}^{1/2}\|_{\mathcal E}  +  \mathcal F_1^N\Big] 
		\lesssim \vartheta \Big[\| u_{h}^{1/2}\|_{\mathcal E}  +  \mathcal F_1^N\Big],
	\end{align*}
	where we use that by definition $\nr = T/\tau$ and
	thus $N/\nr = t_\mathrm{fin}/T=\kappa$.
	This is the assertion. 
\end{proof}
\begin{remark}[Generalizations] Instead of~\eqref{eq:CN}, we could as well employ the more general \emph{Newmark scheme} \cite{New59}
	\begin{equation}\label{eq:Newmark}
		(\hat\partial^2_\tau u_h^n,v_h)_{L^2(\Omega)} 
		+
		a(\widehat{u_h^n},v_h) = (\widetilde{f_h^{n}},v_h)_{L^2(\Omega)}
		\quad\text{for all }v_h \in V_h,
	\end{equation}
	with the alternative definition 
	\begin{equation*}
		\widehat{u_h^n} := \tfrac12\big( 2 c_1 u_h^{n+1} + (1-4c_1 + 2 c_2)u_h^n + (1 + 2c_1 - 2c_2)u_h^{n-1} \big)
	\end{equation*}
	for given parameters $c_1,\,c_2$.
	Here, $\widetilde{f_h^{n}}$ realizes an appropriate weighting of $f_h^{n+1}$, $f_h^{n}$, and $f_h^{n-1}$. Our results directly carry over to this scheme up to a change in constants. We emphasize that the choice $c_1 = 1/4$ and $c_2 = 1/2$ resembles the above Crank--Nicolson scheme.
	
	Note further that also higher-order finite elements could be considered. This would only introduce a dependence on the polynomial degree, but the main arguments follow along the same lines.
\end{remark}

\subsection{Choice of parameters and computational complexity}\label{ss:choiceparams}

Finally, we would like to comment on the scaling of $\ell$ as specified in~\eqref{eq:choiceEll} and corresponding possible/optimal parameter choices for our method. 
To start with, the second and third term in~\eqref{eq:choiceEll} only include a moderate logarithmic scaling (provided that $\vartheta \eqsim h + \tau^2$) and are therefore typically much smaller than the other two terms. 
Moreover, for $T \eqsim H$ 
and with the realistic choice $\tau \geq h$, the first term in~\eqref{eq:choiceEll} scales like
\begin{equation*}
	C_{\tau,h} \tfrac T\tau \log\big(C_{\tau,h}\tfrac T\tau\big) \eqsim \tfrac \tau h \tfrac H\tau \log\big(\tfrac \tau h \tfrac H\tau\big) \eqsim \tfrac Hh \log\big(\tfrac Hh\big).
\end{equation*}
Note that a linear scaling with $H/h$ is physically reasonable to capture the wave cone, such that the first term includes only a logarithmic overhead. 
In particular, we have a reasonable scaling with respect to the spatial parameters independently of the choice of $\tau$. The last term in~\eqref{eq:choiceEll}, however, is still scaled by $\tau/h$ such that a variable scaling between $\tau$ and $h$ influences the necessary choice of~$\ell$, see also the examples in Section~\ref{ss:varscaling} below. There, we also present two examples where the first term is dominant (which is the case for $h \leq H^2$). In that setting, the scaling of the last term in~\eqref{eq:choiceEll} is not critical even for larger choices of $\tau \geq h$.

We summarize some optimal choices of the involved parameters, which are also observed in the numerical examples below:
\begin{itemize}
	\item due to the better scaling in time of the Crank--Nicolson scheme, $\tau \geq h$ is reasonable,
	\item for $H$ rather close to $h$, the choice of $\tau$ has an influence on the decay behavior, which can to some extend be compensated for by a smaller choice of $T$ (reset time),
	\item for $h \leq H^2$, the choice of $\tau$ has no severe influence and $T = H/\beta$ is an appropriate reset time that takes into account faster propagation for larger values of $\beta$,
	\item experimentally, $\ell \geq C \,H/h$ for some $ C \geq 2$ appears to be sufficient in
	the regime under consideration
	(cf., e.g., Figure~\ref{fig:exsol}) when $h \ll H$, although the theory predicts a logarithmic overhead.
\end{itemize}

Finally, we comment on the computational overhead introduced by our strategy. Without leveraging the fact that the computations on the subdomains are parallel, the overhead is mainly dependent on the number of overlapping subdomains. 
It scales like $(\ell\,h/H)^d$, and $\ell\,h/H$ is the number of coarse layers that are added (it makes sense here to count with respect to the coarse mesh on which the patches are defined). The advantage of our strategy, however, is that global computations are avoided and that the computational overhead can be easily compensated for by parallel computations with an appropriate computer. 
The computational overhead and the possibility to parallelize computations is also a feature that our approach shares with classical domain decomposition methods, see, e.g., \cite{TosW05,Gan08}, and particularly \cite{GanHN03,GanH05,GanKM21} in the context of the wave equation. However, domain decomposition approaches may use much smaller overlaps between the subdomains and require specifically chosen boundary conditions. Moreover, they typically involve multiple iterations. Our approach is non-iterative, but needs larger overlaps. A very attractive feature is also that it is very easy to implement and the subdomains only communicate at fixed reset times without the need for specific boundary conditions. 
A more in-depth comparison of the different approaches is subject to future research.   
\begin{figure} 
	\centering
	\scalebox{0.73}{
\begin{tikzpicture}

\begin{axis}[
legend cell align={left},
legend style={fill opacity=0.8, draw opacity=1, text opacity=1, at={(0.03,0.97)}, anchor=north west, draw=white!80!black},
legend pos = north east,
log basis x={10},
log basis y={10},
tick align=outside,
tick pos=left,
x grid style={white!69.0196078431373!black},
xlabel={\phantom{p} mesh size \(\displaystyle H\) \phantom{$\ell$}},
xmin=0.028, xmax=0.54,
xmode=log,
xtick style={color=black},
xtick = {0.03162277660168379,1e-1,0.31622776601683794},
y grid style={white!69.0196078431373!black},
ylabel={rel.~error},
ymin=1e-14, ymax=10,
ymode=log,
ytick style={color=black},
xminorticks=true,
]
\addplot [thick, myBlue, mark=square, mark size=3, mark options={solid}]
table {%
	0.5 4.354633228275700e-14
	0.25 1.091068178591713e-13
	0.125 2.051896136978938e-07
	0.0625 7.127215743682120e-04
	0.03125 1.256621546489939e-01
};
\addlegendentry{\scriptsize $h = 2^{-6}$}
\addplot [semithick, myRed, mark=diamond, mark size=4, mark options={solid}]
table {%
0.5 1.256946063154046e-13
0.25 1.089596689806574e-13
0.125 6.129021711494427e-13
0.0625 9.749583564420689e-07
0.03125 2.995454340817482e-03
};
\addlegendentry{\scriptsize $h = 2^{-7}$}
\addplot [semithick, myOrange, mark=triangle, mark size=4, mark options={solid}]
table {%
0.5 3.545457127017946e-13
0.25 3.051558244464410e-13
0.125 2.822090610031586e-13
0.0625 2.397297656464281e-12
0.03125 3.258703932184376e-06
};
\addlegendentry{\scriptsize $h = 2^{-8}$}
\addplot [semithick, myGreen, mark=o, mark size=3, mark options={solid}]
table {%
0.5 1.001297865572653e-12
0.25 8.632608216935513e-13
0.125 8.044291935583835e-13
0.0625 7.740327285582746e-13
0.03125 9.859718326891644e-12
};
\addlegendentry{\scriptsize $h = 2^{-9}$}
\addplot [semithick,dashed, myGreen]
table {%
		0.55 0.001953125
		0.015625 0.001953125
	};

\end{axis}

\end{tikzpicture}}
	\scalebox{0.73}{
\begin{tikzpicture}

\begin{axis}[
legend cell align={left},
legend style={fill opacity=0.8, draw opacity=1, text opacity=1, at={(0.03,0.97)}, anchor=north west, draw=white!80!black},
legend pos = north east,
log basis x={10},
tick align=outside,
tick pos=left,
x grid style={white!69.0196078431373!black},
xlabel={\phantom{p} parameter \(\displaystyle \ell\) \phantom{$\ell$}},
xmin=9.3, xmax=35.7,
xtick style={color=black},
xtick = {16,24,32},
y grid style={white!69.0196078431373!black},
ylabel={rel.~error},
ymin=1e-14, ymax=10,
ymode=log,
ytick style={color=black},
xminorticks=true,
]
\addplot [thick, myOrange, mark=triangle, mark size=4, mark options={solid}]
table {%
	10 5.059975466082818e+00
	11 3.640427800459589e+00
	12 4.556715924993481e+00
	13 3.128910896837699e+00
	14 1.408158978681319e+00
	15 1.434981006533453e+00
	16 1.084747656509911e+00
	17 4.280500353480426e-01
	18 1.161606182334856e-01
	19 2.934031931190369e-02
	20 7.057984387823395e-03
	21 1.520610207254769e-03
	22 3.248791029502339e-04
	23 6.701524971759793e-05
	24 1.206702464882849e-05
	25 1.776743585947019e-06
	26 2.429282074624669e-07
	27 3.578185450034470e-08
	28 5.510512150635897e-09
	29 8.101570483481841e-10
	30 1.207412837813102e-10
	31 1.778481046497086e-11
	32 2.397297656464281e-12
	33 3.946056746142811e-13
	34 2.764790016972873e-13
	35 2.725416767644139e-13
};
\addlegendentry{\scriptsize $\ell$-patch}
\addplot [semithick,dashed, black]
table {%
	16 100
	16 1e-15
};
\addplot [semithick,dashed, black]
table {%
	32 100
	32 1e-15
};
\end{axis}

\end{tikzpicture}}
	\caption{Errors between the local superposition method and the global Crank--Nicolson method for $A \equiv 1$; 
		left: with respect to $H$ for $\ell = 2H/h$ and variable~$h$; right: with respect to~$\ell$ for $h = 2^{-8}$ and $H = 2^{-4}$.}\label{fig:constA}
\end{figure}
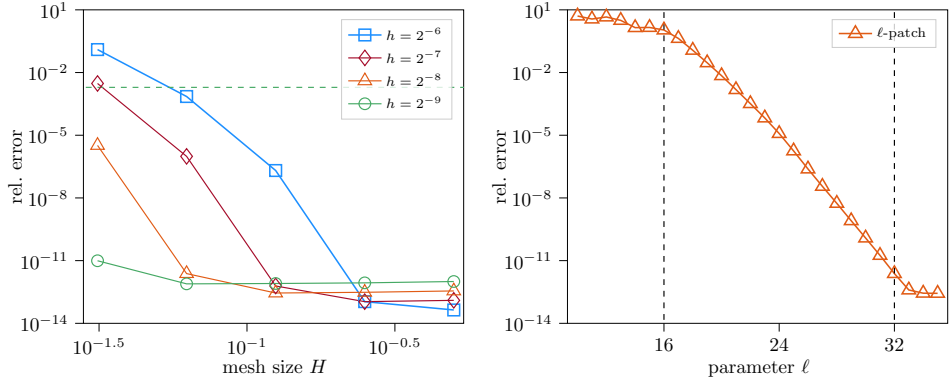

\section{Numerical examples}\label{s:numerics}

In this section, we illustrate the applicability of the local superposition method with a set of numerical examples. We measure all occurring errors in a discrete $L^2(0,t_\mathrm{fin}; H^1_0(\Omega))$-norm. More precisely, for the discrete function $v_h := (v_h^k)_{k \leq N_T} \in V_h$ evaluated at the coarse time points $kT$, we define
\begin{equation*}
	\|v_h\|_{h,T}^2 := \sum_{k \leq N_T} T\,\|v_h^k\|_a^2.
\end{equation*}
All computations are based on Matlab. To better compare the results of the different settings, we set $\Omega = (0,1)^2$, fix the right-hand side $f \equiv 1$, and choose zero initial conditions.
Further, we set $t_\mathrm{fin} = 1$. 
Note that in the examples we refer to the edge length of an element by $H$ and $h$, respectively (instead of the diameter) to avoid explicitly writing the additional factor $\sqrt{2}$.

\subsection{Constant coefficient and equal scaling}

For the first experiment, we choose a rather optimal setting regarding the scaling of the decay constant $\gamma$ in Lemma~\ref{lem:decay}. That is, $A \equiv 1$, $\tau = h$, and $T = H$. Figure~\ref{fig:constA} (left) shows the relative errors in $\|\cdot\|_{h,T}$ between the classical global Crank--Nicolson scheme and the local superposition method for the fixed choice $\ell = 2H/h$. As mentioned above, the first term in~\eqref{eq:choiceEll} is only dominant if $h \leq H^2$. This is also observed in the figure as the errors deteriorate for the fixed choice of $\ell$ when $H$ approaches $h$. For the case $h = 2^{-9}$ ({\protect \tikz{ \draw[line width=1.5pt, myGreen] circle (0.6ex);}}), we do not observe a deterioration of the error, which is significantly smaller than the error of the classical Crank--Nicolson scheme (the scaling $h=2^{-9}$ is indicated by the green dashed line). 
In particular, the moderate choice $\ell = 2H/h$, which corresponds to an increase of the local supports $\omega_i$ by two layers of coarse elements, is sufficient to achieve an almost identical solution with our localized scheme. 
In Figure~\ref{fig:constA} (right), the influence of the choice of $\ell$ is depicted for fixed $h = 2^{-8}$ and $H = 2^{-4}$. We observe that the error does not improve if $\ell \leq H/h$ which is in line with the localization result in Theorem~\ref{thm:locerr}, where the polynomial pre-factor in $\ell$ is dominant compared to the exponential term for smaller choices of $\ell$. From a physical point of view, this effect is also reasonable, because $H/h$ layers are required to capture the wave cone in the first place. For $\ell \geq H/h$, we observe a rapid exponential improvement of the error up to $\ell \approx 2H/h$, where the error does not improve anymore (at a relative error of the order $10^{-12}$). 
Note that $\ell = H/h$ and $\ell = 2H/h$ are highlighted by the vertical dashed gray lines.

\subsection{Constant coefficient and variable scaling}\label{ss:varscaling}
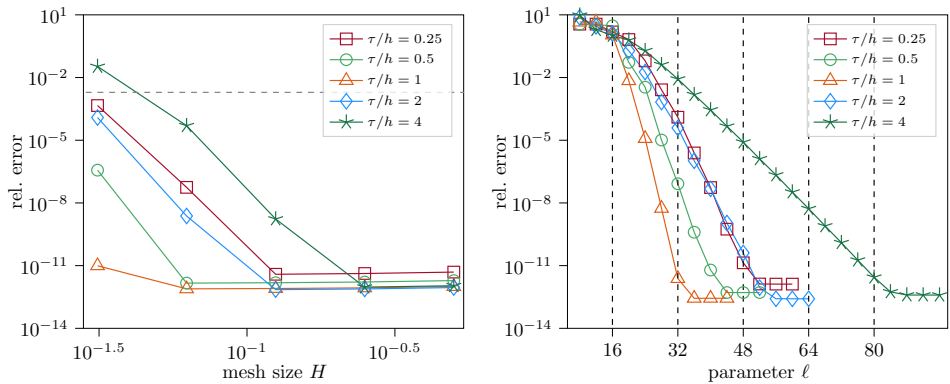
\begin{figure} 
	\centering
	\scalebox{0.73}{
\begin{tikzpicture}

\begin{axis}[
legend cell align={left},
legend style={fill opacity=0.8, draw opacity=1, text opacity=1, at={(0.03,0.97)}, anchor=north west, draw=white!80!black},
legend pos = north east,
log basis x={10},
log basis y={10},
tick align=outside,
tick pos=left,
x grid style={white!69.0196078431373!black},
xlabel={\phantom{p} mesh size \(\displaystyle H\) \phantom{$\ell$}},
xmin=0.028, xmax=0.54,
xmode=log,
xtick style={color=black},
xtick = {0.03162277660168379,1e-1,0.31622776601683794},
y grid style={white!69.0196078431373!black},
ylabel={rel.~error},
ymin=1e-14, ymax=10,
ymode=log,
ytick style={color=black},
xminorticks=true,
]
\addplot [semithick, myRed, mark=square, mark size=3, mark options={solid}]
table {%
0.5 4.904005270487529e-12
0.25 4.198100575457950e-12
0.125 3.859315323851530e-12
0.0625 5.560035192765970e-08
0.03125 4.572666594039695e-04
};
\addlegendentry{\scriptsize $\tau/h = 0.25$}
\addplot [semithick, myGreen, mark=o, mark size=3, mark options={solid}]
table {%
0.5 1.923099731940798e-12
0.25 1.648724381568709e-12
0.125 1.518736920870212e-12
0.0625 1.460506323889361e-12
0.03125 3.696668356382642e-07
};
\addlegendentry{\scriptsize $\tau/h = 0.5$}
\addplot [semithick, myOrange, mark=triangle, mark size=4, mark options={solid}]
table {%
0.5 1.001297865572653e-12
0.25 8.632608216935513e-13
0.125 8.044291935583835e-13
0.0625 7.740327285582746e-13
0.03125 9.859718326891644e-12
};
\addlegendentry{\scriptsize $\tau/h = 1$}
\addplot [semithick, myBlue, mark=diamond, mark size=4, mark options={solid}]
table {%
	0.5 8.939755314479448e-13
	0.25 7.659191845190523e-13
	0.125 7.090773366237204e-13
	0.0625 2.369637171280145e-09
	0.03125 1.217393855788684e-04
};
\addlegendentry{\scriptsize $\tau/h = 2$}
\addplot [semithick, darkgreen, mark=star, mark size=4, mark options={solid}]
table {%
	0.5 1.090765088754748e-12
	0.25 9.398970322888069e-13
	0.125 1.700355047887529e-09
	0.0625 4.964436663674147e-05
	0.03125 3.392995879755367e-02
};
\addlegendentry{\scriptsize $\tau/h = 4$}
\addplot [semithick,dashed, gray]
table {%
		0.55 0.001953125
		0.015625 0.001953125
	};
\end{axis}

\end{tikzpicture}}
	\scalebox{0.73}{
\begin{tikzpicture}

\begin{axis}[
legend cell align={left},
legend style={fill opacity=0.8, draw opacity=1, text opacity=1, at={(0.03,0.97)}, anchor=north west, draw=white!80!black},
legend pos = north east,
log basis x={10},
tick align=outside,
tick pos=left,
x grid style={white!69.0196078431373!black},
xlabel={\phantom{p} parameter \(\displaystyle \ell\) \phantom{$\ell$}},
xmin=5, xmax=98,
xtick style={color=black},
xtick = {16,32,48,64,80},
y grid style={white!69.0196078431373!black},
ylabel={rel.~error},
ymin=1e-14, ymax=10,
ymode=log,
ytick style={color=black},
xminorticks=true,
]
\addplot [semithick, myRed, mark=square, mark size=3, mark options={solid}]
table {%
	8  3.587442789658395e+00
	12 3.605576988644191e+00
	16 1.520413843181296e+00
	20 6.590364771996300e-01
	24 6.280411500830557e-02
	28 2.590321452243266e-03
	32 1.259904184093793e-04
	36 2.464129103398856e-06
	40 5.438385074364328e-08
	44 5.555639608674714e-10
	48 1.364490771406789e-11
	52 1.320924532491350e-12
	56 1.306212756262642e-12
	60 1.303729832619504e-12
};
\addlegendentry{\scriptsize $\tau/h = 0.25$}
\addplot [semithick, myGreen, mark=o, mark size=3, mark options={solid}]
table {%
	8  3.535735772276195e+00
	12 3.376149120588484e+00
	16 2.980558416431594e+00
	20 5.324412023612171e-02
	24 3.452570547016771e-03
	28 1.024164026941105e-05
	32 8.260925080618248e-08
	36 3.977965359189453e-10
	40 6.165403193374082e-12
	44 5.156091782805516e-13
	48 5.141822066176915e-13
	52 5.162735309498190e-13
};
\addlegendentry{\scriptsize $\tau/h = 0.5$}
\addplot [semithick, myOrange, mark=triangle, mark size=4, mark options={solid}]
table {%
	8  4.497553748748031e+00
	12 4.556715924993481e+00
	16 1.084747656509911e+00
	20 7.057984387823395e-03
	24 1.206702464882849e-05
	28 5.510512150635897e-09
	32 2.397297656464281e-12
	36 2.737849608966819e-13
	40 2.745272079025041e-13
	44 2.742237404305053e-13
};
\addlegendentry{\scriptsize $\tau/h = 1$}
\addplot [semithick, myBlue, mark=diamond, mark size=4, mark options={solid}]
table {%
	8  7.579482276917640e+00
	12 2.819999288160662e+00
	16 1.282511779266951e+00
	20 1.897062224915157e-01
	24 1.836970927637610e-02
	28 6.696322580573020e-04
	32 3.883021010677106e-05
	36 1.041527331801855e-06
	40 4.718807787358884e-08
	44 1.091032803617485e-09
	48 4.050336978847896e-11
	52 8.852704698644445e-13
	56 2.602425850839374e-13
	60 2.592719466218290e-13
	64 2.589978861351393e-13
};
\addlegendentry{\scriptsize $\tau/h = 2$}
\addplot [semithick, darkgreen, mark=star, mark size=4, mark options={solid}]
table {%
	8  9.561444468162057e+00
	12 1.953621556818047e+00
	16 1.035877696316409e+00
	20 6.157794180428411e-01
	24 1.924747539245219e-01
	28 4.124908518331889e-02
	32 8.472415051282201e-03
	36 1.515047346367775e-03
	40 2.822842747454476e-04
	44 4.669464594647855e-05
	48 8.146043459807381e-06
	52 1.290110855485905e-06
	56 2.169755777140957e-07
	60 3.308649341256450e-08
	64 5.305176541299102e-09
	68 7.940673735429245e-10
	72 1.253562596369311e-10
	76 1.831929675801508e-11
	80 2.789678766506408e-12
	84 5.539590925693087e-13
	88 3.925092480870499e-13
	92 3.912724376471554e-13
	96 3.896917807415525e-13
};
\addlegendentry{\scriptsize $\tau/h = 4$}
\addplot [semithick,dashed, black]
table {%
	16 100
	16 1e-15
};
\addplot [semithick,dashed, black]
table {%
	32 100
	32 1e-15
};
\addplot [semithick,dashed, black]
table {%
	48 100
	48 1e-15
};
\addplot [semithick,dashed, black]
table {%
	64 100
	64 1e-15
};
\addplot [semithick,dashed, black]
table {%
	80 100
	80 1e-15
};
\end{axis}

\end{tikzpicture}}
	\caption{Errors between the local superposition method and the global Crank--Nicolson method for $A \equiv 1$; 
		left: with respect to~$H$ for $\ell = 2H/h$ and $h = 2^{-9}$, and variable~$\tau/h$; right: with respect to~$\ell$ for $h = 2^{-8}$, $H = 2^{-4}$, and variable~$\tau/h$.}\label{fig:cstVar}
\end{figure}

In a second experiment, we investigate the influence of the scaling between $\tau$ and the mesh parameter $h$. First, we choose $A \equiv 1$, $h=2^{-9}$, and multiple values of $\tau$ and keep an equal scaling $T = H$. We also choose $\ell = 2H/h$ as above. For different values of $H$, the results are presented in Figure~\ref{fig:cstVar} (left). As expected, we observe a deterioration of the errors if $\tau/h$ is increased or decreased away from 1 since $\ell$ needs to be suitably adapted according to~\eqref{eq:choiceEll} (in particular the scaling of the last term). The size of $h$ is once again indicated by the dashed line. Note that for larger values of $H$, the effect is less influential because the choice of $\ell$ leads to (almost) global computations for coarse values of $H$. 
In Figure~\ref{fig:cstVar} (right), the decay behavior with respect to $\ell$ is presented for the fixed choices $h = 2^{-8}$, $H = 2^{-4}$, $T = H$, and multiple values of $\tau$. 
The orange line~({\protect \tikz{ \draw[line width=1.5pt, myOrange] (0,0) -- (0.13,0.23) -- (0.26, 0) -- cycle;}}) corresponds to the decay plot in Figure~\ref{fig:constA}~(right), where $\tau = h$. 
Note that only every fourth value of $\ell$ is plotted and multiples of $H/h$ are once again highlighted by vertical dashed gray lines.
The plot shows a slower decay behavior for $\tau/h \neq 1$, where the influence of $\tau < h$ is less significant than the influence of $\tau > h$. 
That is, the theoretically predicted scaling of $\ell$ (cf.~\eqref{eq:choiceEll}) seems to be pessimistic for $\tau < h$ as the plots do not even indicate a linear scaling of $\ell$ with respect to $\tau/h$ in that case. 
Nevertheless, to avoid larger patches due to an influence on $\ell$ one can use a smaller parameter $T$, which, in turn, increased the number of resets in our algorithm. Any choice $T/H < 1$ reduces the requirements on $\ell$ in the practical regime where the first term in~\eqref{eq:choiceEll} is dominant compared to the last term. The effect is illustrated for the case of $\tau/h = 0.25$ and $T/H \in \{0.25,0.5,1\}$ as well as $\tau/h = 2$ and $T/H \in \{0.5,1\}$ in Figure~\ref{fig:AVar}~(left). The case $T/H = 1$ ({\protect \tikz{ \draw[line width=1.5pt, myRed] (0,0) rectangle (0.23,0.23);}} and {\protect \tikz{ \draw[line width=1.5pt, myBlue] (0,0.1) -- (0.1,0.22) -- (0.2,0.1) -- (0.1,-0.02) -- cycle;}}, solid lines) corresponds to the respective lines in Figure~\ref{fig:cstVar}~(right). It can be observed that the adjustment of $T/H$ has a positive effect on the exponential decay. 
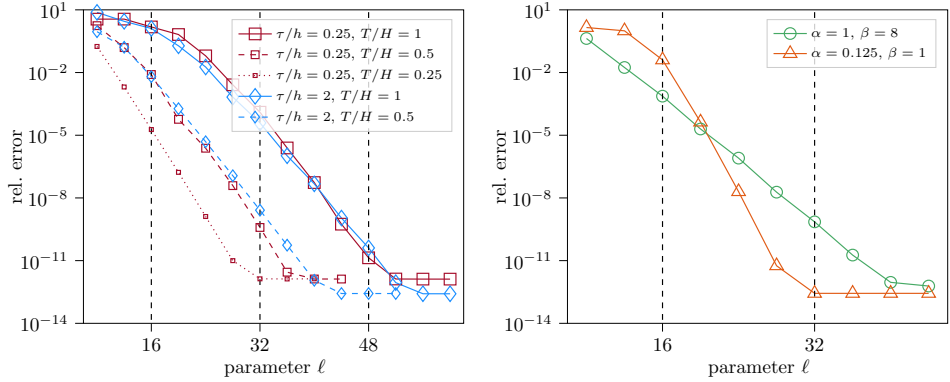
\begin{figure} 
	\centering
	\scalebox{0.73}{
\begin{tikzpicture}

\begin{axis}[
legend cell align={left},
legend style={fill opacity=0.8, draw opacity=1, text opacity=1, at={(0.03,0.97)}, anchor=north west, draw=white!80!black},
legend pos = north east,
log basis x={10},
tick align=outside,
tick pos=left,
x grid style={white!69.0196078431373!black},
xlabel={\phantom{p} parameter \(\displaystyle \ell\) \phantom{$\ell$}},
xmin=6, xmax=62,
xtick style={color=black},
xtick = {16,32,48,64,80},
y grid style={white!69.0196078431373!black},
ylabel={rel.~error},
ymin=1e-14, ymax=10,
ymode=log,
ytick style={color=black},
xminorticks=true,
]
\addplot [semithick, myRed, mark=square, mark size=3, mark options={solid}]
table {%
	8  3.587442789658395e+00
	12 3.605576988644191e+00
	16 1.520413843181296e+00
	20 6.590364771996300e-01
	24 6.280411500830557e-02
	28 2.590321452243266e-03
	32 1.259904184093793e-04
	36 2.464129103398856e-06
	40 5.438385074364328e-08
	44 5.555639608674714e-10
	48 1.364490771406789e-11
	52 1.320924532491350e-12
	56 1.306212756262642e-12
	60 1.303729832619504e-12
};
\addlegendentry{\scriptsize $\tau/h = 0.25$, $T/H = 1$}
\addplot [semithick, myRed, mark=square, mark size=2,dashed, mark options={solid}]
table {%
	8  1.750809343104200e+00
	12 1.563624903488529e-01
	16 8.036793930682391e-03
	20 5.775055007464105e-05
	24 2.392042154849075e-06
	28 3.884398768146736e-08
	32 3.867247201606693e-10
	36 2.801709708606280e-12
	40 1.321050343970743e-12
	44 1.315555658504075e-12
};
\addlegendentry{\scriptsize $\tau/h = 0.25$, $T/H = 0.5$}
\addplot [semithick, myRed, mark=square, mark size=1,dotted, mark options={solid}]
table {%
	8  1.769272299367761e-01
	12 2.035785419960714e-03
	16 1.873543864739934e-05
	20 1.675313239358839e-07
	24 1.291325395490702e-09
	28 1.000369789220608e-11
	32 1.347100138958966e-12
	36 1.329176808950391e-12
	40 1.349536661283045e-12
};
\addlegendentry{\scriptsize $\tau/h = 0.25$, $T/H = 0.25$}
\addplot [semithick, myBlue, mark=diamond, mark size=4, mark options={solid}]
table {%
	8  7.579482276917640e+00
	12 2.819999288160662e+00
	16 1.282511779266951e+00
	20 1.897062224915157e-01
	24 1.836970927637610e-02
	28 6.696322580573020e-04
	32 3.883021010677106e-05
	36 1.041527331801855e-06
	40 4.718807787358884e-08
	44 1.091032803617485e-09
	48 4.050336978847896e-11
	52 8.852704698644445e-13
	56 2.602425850839374e-13
	60 2.592719466218290e-13
};
\addlegendentry{\scriptsize $\tau/h = 2$, $T/H = 1$}
\addplot [semithick, myBlue, mark=diamond, mark size=3,dashed, mark options={solid}]
table {%
	8  9.007772119940809e-01
	12 1.618535842729734e-01
	16 6.324324633192595e-03
	20 1.864885354250369e-04
	24 4.876698272525369e-06
	28 1.155396089402710e-07
	32 2.587891895566900e-09
	36 5.481667706784674e-11
	40 1.147728319081424e-12
	44 2.667298035185099e-13
	48 2.667050271971610e-13
	52 2.663192138245387e-13
};
\addlegendentry{\scriptsize $\tau/h = 2$, $T/H = 0.5$}
\addplot [semithick,dashed, black]
table {%
	16 100
	16 1e-15
};
\addplot [semithick,dashed, black]
table {%
	32 100
	32 1e-15
};
\addplot [semithick,dashed, black]
table {%
	48 100
	48 1e-15
};
\addplot [semithick,dashed, black]
table {%
	64 100
	64 1e-15
};
\addplot [semithick,dashed, black]
table {%
	80 100
	80 1e-15
};
\end{axis}

\end{tikzpicture}}
	\scalebox{0.73}{
\begin{tikzpicture}

\begin{axis}[
legend cell align={left},
legend style={fill opacity=0.8, draw opacity=1, text opacity=1, at={(0.03,0.97)}, anchor=north west, draw=white!80!black},
legend pos = north east,
log basis x={10},
tick align=outside,
tick pos=left,
x grid style={white!69.0196078431373!black},
xlabel={\phantom{p} parameter \(\displaystyle \ell\) \phantom{$\ell$}},
xmin=6, xmax=46,
xtick style={color=black},
xtick = {16,32,48,64,80},
y grid style={white!69.0196078431373!black},
ylabel={rel.~error},
ymin=1e-14, ymax=10,
ymode=log,
ytick style={color=black},
xminorticks=true,
]
\addplot [semithick, myGreen, mark=o, mark size=3, mark options={solid}]
table {%
	8  4.253301554690593e-01
	12 1.744443869503539e-02
	16 7.414625708610254e-04
	20 1.990570140262316e-05
	24 8.028048025284037e-07
	28 1.887344143961010e-08
	32 7.163055787639575e-10
	36 1.844365168122489e-11
	40 9.133707242533110e-13
	44 6.086638946735607e-13
};
\addlegendentry{\scriptsize $\alpha = 1,\, \beta = 8$}
\addplot [semithick, myOrange, mark=triangle, mark size=4, mark options={solid}]
table {%
	8  1.427296999301366e+00
	12 9.915375008732010e-01
	16 4.035727549959312e-02
	20 4.267362171599163e-05
	24 1.996121390364620e-08
	28 5.678736193175597e-12
	32 2.720123169294697e-13
	36 2.716177917986430e-13
	40 2.720335670802433e-13
	44 2.717302376507690e-13
};
\addlegendentry{\scriptsize $\alpha = 0.125,\,\beta = 1$}
\addplot [semithick,dashed, black]
table {%
	16 100
	16 1e-15
};
\addplot [semithick,dashed, black]
table {%
	32 100
	32 1e-15
};
\addplot [semithick,dashed, black]
table {%
	48 100
	48 1e-15
};
\addplot [semithick,dashed, black]
table {%
	64 100
	64 1e-15
};
\addplot [semithick,dashed, black]
table {%
	80 100
	80 1e-15
};
\end{axis}

\end{tikzpicture}}
	\caption{Errors between the local superposition method and the global Crank--Nicolson method for $h = 2^{-8}$, $H = 2^{-4}$ with respect to~$\ell$; 
		left: $A \equiv 1$ and variable choices of $\tau$ and $T$; right: oscillatory $A$, $\tau = h$, and $T = H/\beta$.}\label{fig:AVar}
\end{figure}

We also present two examples where $h$ and $H$ are not as close as in the above experiments. As predicted by the theory, for $h \leq H^2$, the first dominant term regarding the choice of $\ell$ in~\eqref{eq:choiceEll} is independent of the value of $\tau$ provided that $\tau \geq h$. Therefore, a change of the scaling between $\tau$ and $h$ has no severe influence on the decay behavior, which is illustrated in the following. First, we consider a two-dimensional example with $A \equiv 1$ and a right-hand side such that $u(x,t) = \sin(\pi x_1)\sin(\pi x_2)\sin(0.5\pi t)^2$ is the exact solution, choose $H = T = 2^{-4}$, $h = 2^{-11}$, and $\tau = 2^{-8}$ for different values of~$\ell$. The results are plotted in Figure~\ref{fig:exsol} (left). The gray vertical dashed lines once again indicate multiples of $H/h$ while the red horizontal dashed line indicates the size of $h$.
We observe that the decay with respect to $\ell$ starts for $\ell \approx H/h$ and stagnates at $\ell \approx 2H/h$ as for the above cases where $\tau = h$. Since we compare the result of our algorithm to the exact solution, the level of stagnation is larger, related to the choice of $h$. This is in line with our theoretical findings.
The results of a second similar example in one dimension with exact solution $u(x,t) = \sin(\pi x)\sin(0.5\pi t)^2$ is presented in Figure~\ref{fig:exsol} (right). Here, we choose $H = T = 2^{-4}$, $h = 2^{-16}$, $\tau = 2^{-8} = \sqrt{h}$, and once again plot different values of $\ell$. As before, the decaying behavior is observed from $\ell \approx H/h$ to $\ell \approx 2H/h$, although the factor $\tau/h$ is even larger than before.

\subsection{Variable coefficient and equal scaling}

Finally, we present some results for a variable coefficient. More precisely, we choose $A$ as piecewise constant on a mesh of scale $2^{-5}$ with randomly chosen values between $\alpha$ and $\beta$. In Figure~\ref{fig:AVar} (right), we present the decay rates for $\alpha = 1,\,\beta = 8$ and $\alpha = 0.125,\,\beta = 1$, respectively. As before, we choose $h = 2^{-8}$ and $H = 2^{-4}$. Recall the definition of $C_{\tau,h}$ in Lemma~\ref{lem:decay}, which is proportional to $\beta\alpha^{-1}$. Although $\ell$ in~\eqref{eq:choiceEll} depends on $C_{\tau,h}$, one observes that a smaller $\alpha$ actually does not have a negative effect on the decay behavior. Further, as discussed in Section~\ref{ss:choiceparams} $T = H/\beta$ is chosen. This choice makes physically sense to compensate for the larger wave speed and results in a similar decay rate as observed for the case $A \equiv 1$, see also Figure~\ref{fig:constA}. Both observations are generally in line with the expected physical behavior for larger or smaller wave speeds.
\begin{figure} 
	\centering
	\scalebox{0.73}{
\begin{tikzpicture}

\begin{axis}[
legend cell align={left},
legend style={fill opacity=0.8, draw opacity=1, text opacity=1, at={(0.03,0.97)}, anchor=north west, draw=white!80!black},
legend pos = north east,
log basis x={10},
tick align=outside,
tick pos=left,
x grid style={white!69.0196078431373!black},
xlabel={\phantom{p} parameter \(\displaystyle \ell\) \phantom{$\ell$}},
xmin=-9, xmax=331,
xtick style={color=black},
xtick = {64,128,192,256},
y grid style={white!69.0196078431373!black},
ylabel={rel.~error},
ymin=1e-9, ymax=15,
ymode=log,
ytick style={color=black},
xminorticks=true,
]
\addplot [semithick, myRed, mark=square, mark size=3, mark options={solid}]
  table[row sep=crcr]{%
1	0.4460711436912\\
33	1.22953216377195\\
65	2.0536278405325\\
97	1.8778696851845\\
129	0.630785816721261\\
161	0.0215081164649137\\
193	0.000264993579138005\\
225	3.54417079642642e-06\\
257	3.98707250852587e-06\\
289	3.98872166345923e-06\\
321	3.98872543443872e-06\\
};
\addlegendentry{$\tau/h = 8$}
\addplot [semithick,dashed, black]
table {%
	128 100
	128 1e-15
};
\addplot [semithick,dashed, black]
table {%
	256 100
	256 1e-15
};
\addplot [semithick,dashed, myRed]
table {%
	-10 0.00048828125
	333 0.00048828125
};
\end{axis}

\end{tikzpicture}
	\scalebox{0.73}{
\pgfplotsset{scaled x ticks=false}
\begin{tikzpicture}

\begin{axis}[
legend cell align={left},
legend style={fill opacity=0.8, draw opacity=1, text opacity=1, at={(0.03,0.97)}, anchor=north west, draw=white!80!black},
legend pos = north east,
log basis x={10},
tick align=outside,
tick pos=left,
x grid style={white!69.0196078431373!black},
xlabel={\phantom{p} parameter \(\displaystyle \ell\) \phantom{$\ell$}},
xmin=-319, xmax=10561,
xtick style={color=black},
xtick = {2048,4096,6144,8192},
y grid style={white!69.0196078431373!black},
ylabel={rel.~error},
ymin=1e-9, ymax=15,
ymode=log,
ytick style={color=black},
xminorticks=true,
]
\addplot [semithick, myRed, mark=square, mark size=3, mark options={solid}]
  table[row sep=crcr]{%
1	0.575436207664421\\
1025	1.11730058378904\\
2049	3.5949388933921\\
3073	3.89790707117656\\
4097	8.10298257121071\\
5121	0.230309160418104\\
6145	0.00308821328955871\\
7169	1.17984809553544e-05\\
8193	5.21684876040978e-08\\
9217	3.44402284410424e-09\\
10241	3.44733429096453e-09\\
};
\addlegendentry{$\tau = h^{1/2}$}
\addplot [semithick,dashed, black]
table {%
	4096 100
	4096 1e-15
};
\addplot [semithick,dashed, black]
table {%
	8192 100
	8192 1e-15
};
\addplot [semithick,dashed, myRed]
table {%
	-320 0.0000152587890625
	10562 0.0000152587890625
};
\end{axis}

\end{tikzpicture}
	\caption{Errors between the local superposition method and the exact solution in $2d$ for $h = 2^{-11}$, $\tau = 2^{-8}$, and $H = T = 2^{-4}$ (left) and in $1d$ for $h = 2^{-16}$, $\tau = 2^{-8}$, and $H = T = 2^{-4}$ (right) with respect to~$\ell$.}\label{fig:exsol}
\end{figure}
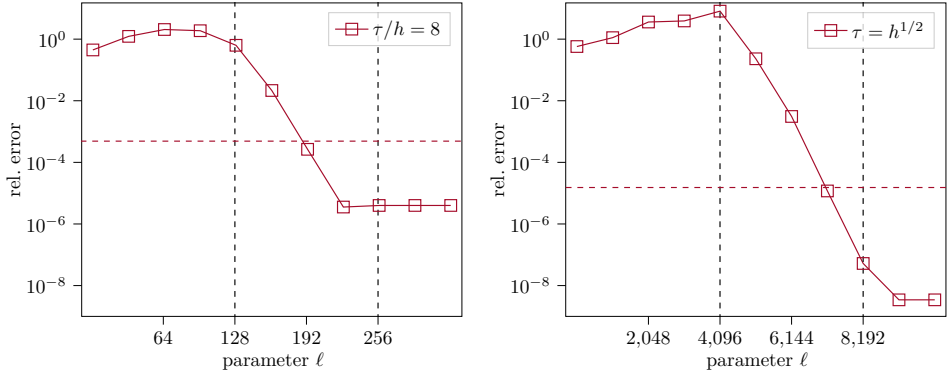

\section{Conclusions}

In this work, we have proposed a localized discretization strategy for the acoustic wave equation.
The idea is based on a superposition of local discrete solutions on overlapping subdomains using a combination of an implicit  Crank--Nicolson scheme and a first-order finite element method.
The localization is mathematically 
justified and evolves around the physical observation that waves travel with finite speed.
In particular, a physically reasonable overlap of the subdomains already allows one to well-approximate the globally defined Crank--Nicolson scheme. 
The algorithm may be understood as a domain decomposition strategy in space on successive short time intervals that does not require additional iterations or carefully designed boundary conditions. 
Moreover, parallelization with respect to the spatial variable is straightforward and
communication between the different subdomains is only required at a number of coarse time steps.  
The presented numerical experiments confirm the theoretical findings.

Our approach may be directly extended to other spatial discretizations and time discretization schemes such as a general Newmark method. Moreover, different discretization methods could be used on the respective subdomains, which would allow for a more flexible localized discretization. However, this requires a more involved analysis.

\end{document}